\DeclareMathOperator{\Var}{Var}
\DeclareMathOperator{\dom}{dom}
\DeclareMathOperator{\supp}{supp}
\newcommand{\bE}{\ensuremath{\mathbb{E}}}
\newcommand{\bN}{\ensuremath{\mathbb{N}}}
\newcommand{\bP}{\ensuremath{\mathbb{P}}}
\newcommand{\bR}{\ensuremath{\mathbb{R}}}
\newcommand{\bZ}{\ensuremath{\mathbb{Z}}}
\newcommand{\ind}{\ensuremath{\mathbbm{1}}}
\newcommand{\cC}{\ensuremath{\mathcal{C}}}
\newcommand{\cE}{\ensuremath{\mathcal{E}}}
\newcommand{\cF}{\ensuremath{\mathcal{F}}}
\newcommand{\bs}{\backslash}
\newcommand{\abs}[1]{\left\vert \, #1 \, \right\vert}
\newcommand{\norm}[1]{\left\Vert \, #1 \, \right\Vert}
\newcommand{\normb}[1]{\interleave \, #1 \, \interleave}
\newcommand{\ddx}[1][1]{\ifnum#1=1 \frac{d}{dx} \else \frac{d^{#1}}{dx^{#1}} \fi}
\newcommand{\ddy}[1][1]{\ifnum#1=1 \frac{d}{dy} \else \frac{d^{#1}}{dy^{#1}} \fi}
\newcommand{\ddt}[1][1]{\ifnum#1=1 \frac{d}{dt} \else \frac{d^{#1}}{dt^{#1}} \fi}
\newcommand{\suml}{\sum\limits}
\newtheorem{theorem}{Theorem}[section]
\newtheorem{lemma}[theorem]{Lemma}
\newtheorem{proposition}[theorem]{Proposition}
\newtheorem{corollary}[theorem]{Corollary}
\newtheorem{definition}[theorem]{Definition}
\newenvironment{remark}[1][Remark]{\begin{trivlist}
\item[\hskip \labelsep {\bfseries #1}]}{\end{trivlist}}
\newcommand{\hP}{\widehat{\bP}}
\newcommand{\hE}{\widehat{\bE}}
\newcommand{\oU}{{\overline{U}}}
\newcommand{\oN}{{\overline{N}}}
\title{A Variance Inequality for Glauber Dynamics applicable to High and Low Temperature Regimes}
\author{Florian V\"ollering\thanks{University of G\"ottingen, IMS, Goldschmidtstra\ss e 7
37077 G\"ottingen,
Germany \newline email: florian.voellering@mathematik.uni-goettingen.de, tel.: +49551 3913520}}
\begin{document}

\maketitle

\begin{abstract}
A variance inequality for spin-flip systems is obtained using comparatively weaker knowledge of relaxation to equilibrium based on coupling estimates for single site disturbances. We obtain variance inequalities interpolating between the Poincar\'e inequality and the uniform variance inequality, and a general weak Poincar\'e inequality. For monotone dynamics the variance inequality can be obtained from decay of the autocorrelation of the spin at the origin, i.e., from that decay we conclude decay for general functions. This method is then applied to the low temperature Ising model, where the time-decay of the autocorrelation of the origin is extended to arbitrary quasi-local functions.
\end{abstract}
{\bf Keywords:} Glauber dynamics, weak Poincar\'e inequality, relaxation to equilibrium, coupling

\section{Introduction}
Variance estimates and related inequalities have a long history in the study of interacting particle systems. Classical inequalities are the log-Sobolev inequality or Poincar\'e's inequality. A basic distinction between various types of estimates is whether they deal with the mixing structure in space, with respect to some measure, or in time, with respect to some dynamics. It is well-established that strong mixing properties in space imply strong mixing properties in time, and vice versa\cite{MARTINELLI:OLIVIERI:94,HOLLEY:STROOK:87}. Often this connection is made via tensorization arguments of the corresponding inequalities.

In \cite{CHAZOTTES:REDIG:VOELLERING:11} it is shown how a different method, disagreement percolation\cite{BERG:MAES:94}, can be used to obtain a Poincar\'e inequality. The idea used is to track how the influence of a single spin-flip possibly percolates through space, and then use subcriticality of the percolation to obtain results.

All the methods above require in some form uniform estimates. If only weaker mixing properties hold, say in expectation instead of uniform, a lot less is known. One of the few general tools available are Poincar\'e and weak Poincar\'e inequalities (see Section \ref{subsection:Poincare} for more details).

In this paper, we approach the problem of mixing in another direction. We go from a restricted form of decay of correlations in time to general decay of correlations in time. The idea is to track the influence of a single spin-flip through time and space. Even if it is possible that a single flip has a very large influence, it may be that the configurations where that is the case are exceptional, and typically the influence is small. 

Given that an interacting particle system with nearest-neighbour Glauber dynamics satisfies those coupling conditions we obtain variance estimates for the ergodic measures as well as the relaxation of the dynamics. In the case of attractive dynamics, the coupling condition can be relaxed to a condition on the auto-correlation of the spin at the origin. Using the recent progress in \cite{LUBETZKY:MARTINELLI:SLY:TONINELLI:10} on the low-temperature Ising model we can extend the results to obtain quasi-polynomial relaxation to equilibrium of the Glauber dynamics.

\section{Definitions and Notation}
\subsection{Setting}
We consider the state space $\Omega = \{-1,+1\}^{\bZ^d}$. For a function $f:\Omega\to\bR$, which is generally assumed to be bounded and measurable, define
\[ \nabla_x f(\eta) := f(\eta^x)-f(\eta),\quad \eta\in\Omega, x\in\bZ^d, \]
where $\eta^x$ is the configuration $\eta$ flipped at $x$, i.e., $\eta^x(x)=-\eta(x)$ and $\eta^x(y)=\eta(y)$ for $y\neq x$. We call $f$ local if $\nabla_xf = 0$ for all but finitely many $x\in\bZ^d$. In addition, we define a family of semi-norms for functions on $\Omega$, 
\begin{align*}
\normb{f}_p &:= \left( \sum_{x\in\bZ^d} \sup_{\eta\in\Omega} \left|\nabla_x f(\eta)\right|^p \right)^{\frac1p},\quad p\geq1.
\end{align*}

A probability measure $\mu$ on the space $\Omega$ is a called a Markov random field if the probability of observing a plus-spin(or minus-spin) given the spin of all other sites depends only the spin of the nearest neighbours. In terms of a random variable $\xi$ on $\Omega$ that means
\[ \mu\big( \xi(x)=+1 \,\big|\, \forall\,y\neq x : \xi(y)=\eta(y) \big) = \mu\big( \xi(x)=+1 \,\big|\, \forall\,y, \abs{y- x}=1 : \xi(y)=\eta(y) \big) \]
for any $\eta\in\Omega$. With this fact in mind, define
\begin{align*}
c_+(x,\eta) &= \mu\left( \xi(x)=+1 \,\middle|\, \forall\,y\neq x : \xi(y)=\eta(y) \right) ;\\
c_-(x,\eta) &= \mu\left( \xi(x)=-1 \,\middle|\, \forall\,y\neq x : \xi(y)=\eta(y) \right) = 1-c_+(x,\eta) .
\end{align*}
The conditional probabilities are called translation invariant if $c_+(x,\eta) = c_+(0,\tau_x\eta)$ where $\tau_x\eta(y) = \eta(x+y)$.

A natural dynamics with respect to $\mu$ is the Glauber dynamics, where spins at site $x$ flip individually according to some rates $c(x,\eta)$. Here we choose the heat-bath Glauber dynamics, where the flip rates are given by the conditional probabilities $c_+, c_-$:
\begin{align*}
c(x,\eta) := 
\begin{cases}
c_+(x,\eta),\quad&\eta(x)=-1;\\
c_-(x,\eta),\quad&\eta(x)=+1.
\end{cases}
\end{align*}
The associated Markov process $(\eta_t)_{t\geq0}$ is then defined via its generator $L$ acting on the core of local functions,
\begin{align*}
Lf(\eta) &= \suml_{x\in\bZ^d} c(x,\eta)\nabla_x f(\eta).
\end{align*}
The existence of the process is proved for example in \cite[section I.3]{LIGGETT:05}.
Let $\bP_\eta,\eta\in\Omega$, be the path measures on the space of cadlag trajectories and $S_tf(\eta) = \bE_\eta f(\eta_t)$ the corresponding semi-group. We assume the measure $\mu$ to be ergodic with respect to the Glauber dynamics, that is $S_t f = f$ implies $f$ is constant $\mu$-a.s. Note that there can be multiple ergodic measures for the same dynamics.

\subsection{Poincar\'e and uniform variance inequalities}\label{subsection:Poincare}
The Dirichlet form $\cE$ associated to $L$ is given by
\[ \cE(f,f) = -2\int f(\eta)Lf(\eta)\,\mu(d\eta) = \suml_{x\in\bZ^d}\int c(x,\eta)(\nabla_xf)^2(\eta)\,\mu(d\eta). \]
A Poincar\'e inequality is said to hold if for some $K>0$
\begin{align}\label{eq:poincare}
&\Var_\mu(f) \leq K\cE(f,f) = K\sum_{x\in\bZ^d}\int c(x,\eta)(\nabla_x f)^2(\eta)\,\mu(d\eta)
\end{align}
hold for all $f\in L^2(\mu)$. The Poincar\'e inequality is equivalent to a spectral gap of the (self-adjoined) generator $L$ in $L^2(\mu)$ and implies exponential relaxation of the semi-group in $L^2(\mu)$. Under the assumption that $\inf_{\eta\in\Omega}c(\eta,0)>0$, \eqref{eq:poincare} is equivalent to
\begin{align}\label{eq:poincare2}
&\Var_\mu(f) \leq K'\sum_{x\in\bZ^d}\int (\nabla_x f)^2(\eta)\,\mu(d\eta) = K'\sum_{x\in\bZ^d}\norm{(\nabla_x f)^2}_{L^2(\mu)}.
\end{align}

A much weaker inequality is the uniform variance inequality
\begin{align}\label{eq:poincare3}
\Var_{\mu}(f) \leq K'' \normb{f}_2^2 = K''\sum_{x\in\bZ^d}\norm{(\nabla_x f)^2}_\infty.
\end{align}
To the authors knowledge this inequality is not related to any form of relaxation of the semi-group. 

\subsection{Weak Poincar\'e inequality}
When the Poincar\'e inequality does not hold ($K=K'=\infty$) but \eqref{eq:poincare3} is too weak because one still wants to obtain some information about the relaxation speed to equilibrium one can go to other inequalities. One is the so-called weak Poincar\'e inequality, usually formulated as
\begin{align}
\Var_\mu(f) \leq \alpha(r)\cE(f,f) + r \Phi(f), \quad \mu(f)=0, r>0,
\end{align}
where $\Phi(\lambda f)=\lambda^2\Phi(f),\Phi(f)\in[0,\infty]$, and $\alpha$ is a function decreasing to 0. 
This implies the following relaxation to equilibrium:
\[ \Var_\mu(S_T f) \leq \xi(T)\left(\sup_{t\geq 0}\Phi(S_t f) + \Var_\mu(f)\right)  \]
with $\xi(T) = \inf\{ r\geq 0: -\frac{1}{2}\alpha(r)\log(r)\leq T\}$ (see \cite{ROECKNER:WANG:01}).

\section{Main results}
Let $\hP_{\eta,\xi}$ be the basic coupling (based on the graphical construction, see Section \ref{section:graphical-construction}. See also for example \cite[section III.1]{LIGGETT:05}) between two copies of the dynamics starting from the configurations $\eta,\xi\in\Omega$. Set
\begin{align}
\theta_t(\eta) &= \hP_{\eta^0,\eta}(\eta_t^1\neq \eta_t^2) ,\quad t\geq0.
\end{align}
For $p\in[1,\infty]$ define the function $D_p : [0,\infty[ \to [0,\infty]$ as
\[D_p(T) = \int_T^\infty (t+1)^{2d+2} \norm{c_{q}\theta_t}_{L^q(\mu)} \,dt,\]
where $\frac1p + \frac1q =1$ and $c_{q}(\eta)=c(0,\eta)^{\frac1q} \leq 1$. Consequently $\norm{c_q\theta_t}_{L^q(\mu)}\leq\norm{\theta_t}_{L^q(\mu)}$.

The function $D_p$ is going to determine the relaxation speed of $S_tf$ for general functions. Note that by definition $D_p$ is decreasing.

\begin{theorem}\label{thm:main-result}
Let $\mu$ be a translation invariant Markov random field, and $S_t$ the associated heat-bath semi-group. Fix $p\in[1,\infty]$ and assume $D_p(0)<\infty$. For all $f:\Omega\to\bR$ with $\normb{f}_2<\infty$ the following inequality holds:
\begin{align}\label{eq:main-result}
\Var_\mu(S_T f) \leq C_d D_p(T) \sum_{x\in\bZ^d} \norm{(\nabla_x f)^2}_{L^p(\mu)}.
\end{align}
Here $C_d$ is a universal constant depending only on the dimension $d$.
\end{theorem}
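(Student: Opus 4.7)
The plan is to convert $\Var_\mu(S_T f)$ into a time integral of Dirichlet forms and then reduce each to the coupling data $\theta_t$.

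First, by the dissipation identity $\frac{d}{dt}\Var_\mu(S_t f)=-\cE(S_t f,S_t f)$ together with ergodicity ($\Var_\mu(S_t f)\to 0$), one obtains $\Var_\mu(S_T f)=\int_T^\infty \cE(S_t f,S_t f)\,dt$. It therefore suffices to prove the pointwise-in-$t$ estimate
\[ \cE(S_t f,S_t f) \leq C_d(t+1)^{2d+2}\norm{c_q\theta_t}_{L^q(\mu)}\sum_{y\in\bZ^d}\norm{(\nabla_y f)^2}_{L^p(\mu)}, \]
since integrating in $t$ reproduces $D_p(T)$.

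For the pointwise estimate, each $\nabla_x S_t f$ is represented via the basic coupling as $\nabla_x S_t f(\eta)=\hE_{\eta^x,\eta}[f(\eta_t^1)-f(\eta_t^2)]$. Cauchy--Schwarz on the event $\{\eta_t^1\neq\eta_t^2\}$ together with translation invariance of $\mu$ and the dynamics gives $(\nabla_x S_t f(\eta))^2\leq\theta_t(\tau_x\eta)\,A_x(\eta)$ with $A_x(\eta):=\hE_{\eta^x,\eta}[(f(\eta_t^1)-f(\eta_t^2))^2]$. Splitting $c(x,\eta)=c(x,\eta)^{1/q}c(x,\eta)^{1/p}$, bounding $c^{1/p}\leq 1$, and applying H\"older with conjugate $(p,q)$ plus translation invariance yields
\[ \int c(x,\eta)\theta_t(\tau_x\eta)A_x(\eta)\,d\mu\leq\norm{c_q\theta_t}_{L^q(\mu)}\norm{A_x}_{L^p(\mu)}, \]
and the task reduces to showing $\sum_x\norm{A_x}_{L^p(\mu)}\leq C_d(t+1)^{2d+2}\sum_y\norm{(\nabla_y f)^2}_{L^p(\mu)}$.

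To bound $A_x$ I would use the graphical construction: the disagreement set $D_t^{(x)}:=\{y:\eta_t^1(y)\neq\eta_t^2(y)\}$ is contained in a random cluster $R_t^{(x)}$ of sites reachable from $x$ by a chain of Poisson marks within time $t$, with polynomial moments $\hE[|R_t^{(x)}|^m]\leq C_{d,m}(t+1)^{dm}$ for every $m$. Telescoping $f(\eta_t^1)-f(\eta_t^2)$ across $D_t^{(x)}$ and applying Cauchy--Schwarz gives $(f(\eta_t^1)-f(\eta_t^2))^2\leq|D_t^{(x)}|\sum_y\ind_{y\in D_t^{(x)}}(\nabla_y f(\zeta_y))^2$ for suitable intermediate configurations $\zeta_y$. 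Dominating $|D_t^{(x)}|\leq|R_t^{(x)}|$ supplies one factor $(t+1)^d$; interchanging $\sum_x$ and $\sum_y$ and using that each $y$ lies in $R_t^{(x)}$ for at most $C_d(t+1)^d$ values of $x$ supplies another; the remaining $(t+1)^2$ is absorbed by the Poisson-tail estimates needed to replace $|R_t^{(x)}|$ by its deterministic polynomial envelope inside the $L^p(\mu)$ norm.

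The chief technical obstacle is converting $\hE[(\nabla_y f(\zeta_y))^2]$ into $\norm{(\nabla_y f)^2}_{L^p(\mu)}$, since $\zeta_y$ is produced by the coupled trajectories rather than being $\mu$-distributed. The graphical construction decouples the Poisson data from the initial configuration (which is $\mu$-sampled), so that conditionally on the Poisson processes one can reduce the inner integration to the law of the original $\eta\sim\mu$ pushed through a deterministic map; invariance of $\mu$ together with the Markov random field structure then controls the absolute-continuity of the law of $\zeta_y$ with respect to $\mu$ at the cost of constants absorbed into $C_d$, while the polynomial powers are absorbed by the $(t+1)^{2d+2}$ factor.
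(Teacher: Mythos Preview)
Your overall strategy matches the paper's up to the point you yourself flag as the chief obstacle, and your proposed resolution of that obstacle is where the argument breaks. The intermediate configurations $\zeta_y$ obtained by spatially telescoping across the disagreement set $D_t^{(x)}$ at time $t$ differ from a $\mu$-distributed configuration by flipping a random subset of $D_t^{(x)}$. For a nearest-neighbour Gibbs measure the Radon--Nikodym derivative of such a flipped configuration against $\mu$ is of order $\exp(c\,|D_t^{(x)}|)$, not a dimension-only constant; since $|D_t^{(x)}|$ is typically of order $t^d$, this factor swamps the polynomial prefactor $(t+1)^{2d+2}$ and the bound on $\sum_x\|A_x\|_{L^p(\mu)}$ cannot close for any $p<\infty$. (For $p=\infty$ your argument does go through, but that is the easy uniform case.) The claim that ``the Markov random field structure controls the absolute continuity at the cost of constants absorbed into $C_d$'' is therefore false as stated.

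The paper's device for this step is qualitatively different and is the main idea of the proof. Rather than telescoping spatially at time $t$, one uses the reordering property of the graphical construction (Lemma~\ref{lemma:reordering}) to first perform all resampling events \emph{outside} the cone $C_{t,x}$, producing a common ancestor $\xi=\Psi(\eta,\oN_t^\oU\setminus C_{t,x}^\oU)$; both $\eta_t^1$ and $\eta_t^2$ are then obtained from $\xi$ by the finitely many resampling events inside the cone. One telescopes along this sequence of resampling events (in space-time, not in space), and because each resampling step preserves $\mu$ exactly (Proposition~\ref{prop:graphical-construction}(b)), every intermediate configuration $\xi_k,\widetilde\xi_k$ is genuinely $\mu$-distributed under $\int\mu(d\eta)\int d\oU$. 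Applying H\"older with respect to this joint integration then turns $(\nabla_{x_k}f(\widetilde\xi_{k-1}))^{2}$ directly into $\|(\nabla_{x_k}f)^2\|_{L^p(\mu)}$ with no absolute-continuity loss. The number of telescoping steps is $|C_{t,x}|$, whose second moment is the source of the $(t+1)^{2d+2}$ factor.
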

\begin{remark}
For $T=0$ we obtain the variance inequality
\begin{align*}
\Var_\mu(f) \leq D_p(0) \sum_{x\in\bZ^d} \norm{(\nabla_x f)^2}_{L^p(\mu)},
\end{align*}
which interpolates between the Poincar\'e inequality ($p=1$) and the uniform variance inequality($p=\infty$).
\end{remark}

In the case of attractive spin-systems it is possible to bound $D_p$ by the auto-correlation of the spin at the origin, which can be easier to estimate than $\theta_t$.
\begin{theorem}\label{thm:auto-correlation}
Assume that the spin-system is attractive. Let $\phi(t):=\Var_\mu(S_t g)$, $g(\eta)=\eta(0)$, be the auto-correlation of the spin at the origin. Then the function $D_p$ can be estimated by 
\[ D_p(T) \leq C_d' \int_T^\infty (t+1)^{3d+2} \left(\phi(t)\right)^{\frac{p-1}{4p}} \,dt,\]
with a dimension dependent constant $C_d'>0$.
\end{theorem}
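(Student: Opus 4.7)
I would bound $\theta_t$ by a union bound over single-site disagreements (exploiting attractiveness), then use Cauchy--Schwarz together with translation invariance to rewrite the resulting sum of squared discrete gradients as the Dirichlet form of the autocorrelation observable $S_t g_0$, which in turn is controlled by $\phi(t)$ through its time derivative.

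\textbf{Reduction to the Dirichlet form.} In the attractive case the basic coupling from $(\eta^0,\eta)$ produces configurations with $\eta_t^1(x)-\eta_t^2(x)$ of definite sign, so $|\nabla_0 S_t g_x(\eta)|=2\,\hP_{\eta^0,\eta}(\eta_t^1(x)\ne\eta_t^2(x))$. Markov's inequality applied to the number $N_t$ of disagreement sites then gives
\[
\theta_t(\eta)\le \hE_{\eta^0,\eta}[N_t]=\tfrac12\sum_{x\in\bZ^d}|\nabla_0 S_t g_x(\eta)|.
\]
Standard Poisson estimates on the graphical construction allow one to truncate this sum to a ball $B_{Kt}$ of radius proportional to $t$, up to an error exponentially small in $t$. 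Cauchy--Schwarz first in $\mu$ (splitting the weight $c(0,\eta)=c(0,\eta)^{1/2}\cdot c(0,\eta)^{1/2}$) and then in the counting measure on $B_{Kt}$ yields
\[
\int c(0,\eta)\!\sum_{x\in B_{Kt}}\!|\nabla_0 S_t g_x|\,\mu(d\eta)\le |B_{Kt}|^{1/2}\Bigl(\sum_{x\in\bZ^d}\int c(0,\eta)(\nabla_0 S_t g_x)^2\,\mu\Bigr)^{1/2}.
\]
By translation invariance of both $\mu$ and the rates, $\int c(0,\eta)(\nabla_0 S_t g_x)^2\mu=\int c(-x,\eta)(\nabla_{-x}S_t g_0)^2\mu$, so summing over $x$ collapses the inner sum to $\cE(S_t g_0,S_t g_0)$.

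\textbf{From the Dirichlet form to $\phi$ and $L^q$ interpolation.} Since $\frac{d}{dt}\phi(t)=-\cE(S_t g_0,S_t g_0)$ and the map $t\mapsto\cE(S_t g,S_t g)$ is itself non-increasing (its derivative equals $-4\|LS_t g\|_{L^2(\mu)}^2\le 0$), integrating over $[t/2,t]$ gives $\cE(S_t g_0,S_t g_0)\le(2/t)\phi(t/2)$. Combined with $|B_{Kt}|^{1/2}\lesssim t^{d/2}$ this produces $\int c(0,\eta)\theta_t\,\mu\le C_d(t+1)^{(d-1)/2}\phi(t/2)^{1/2}$ up to an exponentially small tail. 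For general $q\ge 1$, use $\theta_t\le 1$ to get $\theta_t^q\le\theta_t^{1/2}$, then one more Cauchy--Schwarz with $\int c(0,\eta)\,\mu\le 1$:
\[
\|c_q\theta_t\|_{L^q(\mu)}^q=\int c(0,\eta)\theta_t^q\,\mu\le\int c(0,\eta)\theta_t^{1/2}\,\mu\le\Bigl(\int c(0,\eta)\theta_t\,\mu\Bigr)^{1/2}.
\]
Taking $q$-th roots delivers $\|c_q\theta_t\|_{L^q(\mu)}\le C_d(t+1)^{(d-1)/(4q)}\phi(t/2)^{1/(4q)}$. Multiplying by $(t+1)^{2d+2}$, using $(d-1)/(4q)\le d$, and rescaling $t\mapsto 2t$ inside the integral defining $D_p(T)$ then yields the claimed bound (the shifted lower integration limit being absorbed into the constant $C_d'$).

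\textbf{Main obstacle.} The conceptual heart is the translation-invariance identity, which converts the ``vertical'' localized sum $\sum_x\int c(0,\eta)(\nabla_0 S_t g_x)^2\mu$ at the single site $0$ into the ``horizontal'' Dirichlet form $\cE(S_t g_0,S_t g_0)$ of the autocorrelation observable---this is precisely what lets $\phi$ enter via $\frac{d}{dt}\phi=-\cE(S_t g_0,S_t g_0)$. Executing this cleanly requires combining it with finite propagation speed to truncate the disagreement sum \emph{before} summing globally; the exponent $1/(4q)$ in the bound then emerges from the two successive Cauchy--Schwarz steps (one to pass from $|\nabla_0 S_t g_x|$ to $(\nabla_0 S_t g_x)^2$, and one to pass from $\theta_t$ to $\theta_t^{1/2}$).
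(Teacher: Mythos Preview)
Your route through the Dirichlet form is different from the paper's and largely sound, but the closing step has a real gap. After your Cauchy--Schwarz chain you obtain $\|c_q\theta_t\|_{L^q(\mu)} \lesssim (t+1)^{(d-1)/(4q)}\phi(t/2)^{1/(4q)}$, with $\phi(t/2)$ rather than $\phi(t)$, because the bound $\cE(S_t g_0,S_t g_0)\le (2/t)\phi(t/2)$ inherently costs a factor of two in time. The substitution $t\mapsto 2t$ inside $\int_T^\infty$ shifts the lower limit to $T/2$, and this is \emph{not} absorbable into a $T$-independent constant $C_d'$: if $\phi$ decays super-exponentially, $\int_{T/2}^T$ can exceed $\int_T^\infty$ by an arbitrarily large factor as $T\to\infty$. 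Your argument therefore proves the statement with $\phi(t/2)$ in the integrand (equivalently, lower limit $T/2$), which is just as useful for applications but is not the theorem as written. There is no cheap repair along your line, since $L$ is unbounded and one cannot simply estimate $\cE(S_t g_0,S_t g_0)\le C\phi(t)$.

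The paper bypasses the Dirichlet form entirely. After the same attractiveness identity $\hE_{\eta^0,\eta}\ind_{\eta^1_t(x)\ne\eta^2_t(x)} = \tfrac12|S_t g_x(\eta^0)-S_t g_x(\eta)|$, it splits around the mean $m=\mu(g_0)$ by the triangle inequality and bounds each piece directly via $\int|S_t g_x-m|\,\mu(d\eta) \le \Var_\mu(S_t g_0)^{1/2}=\phi(t)^{1/2}$; for the $\eta^0$-term, detailed balance $c(0,\eta)\mu(d\eta)=c(0,\eta^0)\mu(d\eta^0)$ handles the change of variables. The random cone tip $B_t$ is decoupled from the disagreement indicator by Cauchy--Schwarz over the joint law, and $\sum_x(\int\ind_{x\in B_t}\,d\oN)^{1/2}\le c_2(t+1)^d$ from the first-passage-percolation lemma supplies the polynomial factor. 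This produces $\phi(t)$ at the correct time with no shift. Your identity $\sum_x\int c(0,\eta)(\nabla_0 S_t g_x)^2\,\mu(d\eta) = \cE(S_t g_0,S_t g_0)$ is a nice observation, but converting the Dirichlet form back to the variance is precisely what introduces the time shift that the paper's direct variance route avoids.
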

A good example where this result can be applied is the two-dimensional low-temperature Ising model. Let $\mu^+$ be the plus phase of the 2-dimensional Ising model in the low temperature regime. Recently in \cite{LUBETZKY:MARTINELLI:SLY:TONINELLI:10} the estimate
\[ \Var_{\mu^+}(S_tg) \leq \exp\left(-e^{c(\beta)\sqrt{\log(t+1)}}\right) \]
was obtained, with $c(\beta)$ some temperature dependent constant. Combining this with Theorem \ref{thm:auto-correlation} gives a variance estimate for general functions.
\begin{corollary}
Fix $p>1$. Let $\widetilde{D}_p: [0,\infty[ \to [0,\infty[$ be given by
\[ \widetilde{D}_p(T) = c_p(\beta) \int_T^\infty \exp\left(8\log(t+1)-\frac{p-1}{4p} e^{c(\beta)\sqrt{\log(t+1)}} \right) \,dt.\]
For all $f:\Omega\to\bR$ the relaxation of the semi-group in the plus-phase is estimated by
\[ \Var_{\mu^+}(S_T f) \leq \widetilde{D}_p(T) \sum_{x\in\bZ^d} \norm{(\nabla_x f)^2}_{L^p(\mu)}.  \]
\end{corollary}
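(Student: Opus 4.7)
The corollary follows by chaining the hypotheses together in essentially a mechanical way; the only thing to keep track of are constants and the dimension. The plan is as follows.

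First I would verify that Theorem~\ref{thm:auto-correlation} applies to the plus phase $\mu^+$ of the low-temperature $2$-dimensional Ising model. Heat-bath Glauber dynamics of the ferromagnetic Ising model is attractive (monotone) in the standard FKG sense, and $\mu^+$ is translation invariant and a Markov random field, so all hypotheses of Theorems~\ref{thm:main-result} and~\ref{thm:auto-correlation} are in place.

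Next I would substitute the Lubetzky--Martinelli--Sly--Toninelli bound
\[ \phi(t) = \Var_{\mu^+}(S_t g) \leq \exp\bigl(-e^{c(\beta)\sqrt{\log(t+1)}}\bigr) \]
into the conclusion of Theorem~\ref{thm:auto-correlation}. With $d=2$ the polynomial weight is $(t+1)^{3d+2}=(t+1)^{8}=\exp(8\log(t+1))$, and taking the $\tfrac{p-1}{4p}$-th power of $\phi(t)$ converts its double-exponential decay into an exponential of $-\tfrac{p-1}{4p}e^{c(\beta)\sqrt{\log(t+1)}}$. Combining these two factors under a single exponential gives
\[ D_p(T) \leq C_2' \int_T^\infty \exp\!\left(8\log(t+1)-\tfrac{p-1}{4p}\, e^{c(\beta)\sqrt{\log(t+1)}}\right) dt, \]
which is exactly (up to absorbing $C_2'$ into the constant $c_p(\beta)$) the function $\widetilde{D}_p(T)$. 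One should briefly note that this integral is finite: for $t$ large the double-exponential term $e^{c(\beta)\sqrt{\log(t+1)}}$ dominates $\log(t+1)$, so the integrand decays faster than any polynomial.

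Finally I would apply Theorem~\ref{thm:main-result} to $\mu^+$ at the chosen $p>1$, which gives
\[ \Var_{\mu^+}(S_T f) \leq C_2\, D_p(T) \sum_{x\in\bZ^2}\norm{(\nabla_x f)^2}_{L^p(\mu^+)} \leq \widetilde{D}_p(T)\sum_{x\in\bZ^2}\norm{(\nabla_x f)^2}_{L^p(\mu^+)}, \]
after absorbing $C_2$ into $c_p(\beta)$. For $f$ with $\normb{f}_2=\infty$ or $\sum_x\norm{(\nabla_x f)^2}_{L^p(\mu^+)}=\infty$ the claim is trivial. Since every step is a direct substitution, there is no real obstacle; the only point to watch is making sure the temperature-dependent constant $c_p(\beta)$ collects all the dimensional and proof-internal constants so that the stated formula is literally correct.
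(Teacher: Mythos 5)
Your proposal is correct and matches the paper's (implicit) argument exactly: the corollary is obtained by plugging the Lubetzky--Martinelli--Sly--Toninelli bound into Theorem~\ref{thm:auto-correlation} with $d=2$ (so $(t+1)^{3d+2}=(t+1)^8$) and then applying Theorem~\ref{thm:main-result}, absorbing all constants into $c_p(\beta)$. Your extra remarks on attractiveness, finiteness of the integral, and the trivial case $\normb{f}_2=\infty$ are fine and only make explicit what the paper leaves unsaid.
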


\subsection{Extensions to more general settings}
Even though in the present paper the setting is limited to nearest neighbor spin-systems with only $+$ or $-$ spins, this is done more for convenience than out of necessity. It is possible to extend all results to spins in $\{1,...,K\}$ for $K \in \bN$ arbitrary. One then has flip rates $c_k(x,\eta)$ for the rate to go from $\eta$ to $\eta^x_k$, which is the configuration $\eta$ except that at $x$ the value of the spin is replaced by $k$. The coupling condition is then based on $\sum_{k=1}^K  \int c_k(0,\eta) \hP_{\eta^0_k,\eta}(\eta^1\neq\eta^2)^q \mu(d\eta)$.

Everything can also be extended to finite range instead of nearest neighbor interactions. The nearest neighbor interaction is only used to limit the spread of an initial discrepancy to a space-time cone. The same can be done if the interaction is finite range, by changing the partial order in Definition \ref{definition:resampling-order} to allow for interaction paths with $\abs{x_m-x_{m-1}}\leq R$, where $R$ is the interaction range.

\section{Discussion}
\subsection{The coupling parameter $\theta_t$}
The coupling parameter $\theta_t(\eta)=\hP_{\eta^0,\eta}(\eta^1\neq\eta^2)$ is what one needs to control to use Theorem \ref{thm:main-result}. This parameter describes how likely it is that a single flip produces differences which persist up to time $t$. In the literature one can find also other coupling parameters, notably $\rho_t := \sup_{\eta,\xi\in\Omega}\hP_{\eta,\xi}(\eta^1_t(0) \neq \eta^2_t(0))$. If the spin system is attractive, then $\rho_t = S_t g(\overline{+1})-S_t g(\overline{-1})$, where $g(\eta)=\frac12\eta(0)$ and $\overline{+1}$ ($\overline{-1}$) is the configurations with all $+$ ($-$) spins, and hence $\rho_t$ can be seen as a uniform version of $\phi_t$ from Theorem \ref{thm:auto-correlation}.

Comparing $\theta_t$ and $\rho_t$, $\theta_t$ looks at global differences caused by a single spin change while $\rho_t$ looks at differences at a single spin originating from global differences.
The advantage of $\theta_t$ over $\rho_t$ is that we also have the reference spin configuration $\eta$, with respect to which the influence of a spin flip is measured. By distinguishing between uniform estimates with the worst case configuration (the $p=1$ case) and moment estimates (the $p>1$ case) we have more regimes in which the method is applicable. In particular $\theta_t$ is not limited to the uniqueness regime, in contrast to $\rho_t$.

\subsection{The case $p=1$}
For $p=1$ we have the uniform setting, where $\sup_{\eta\in\Omega}\theta_t(\eta)$ decays sufficiently fast. Having uniform control on the coupling  is a strong condition. If $D_1(0)<\infty$ then Theorem \ref{thm:main-result} implies a Poincar\'e inequality and hence exponentially fast relaxation to equilibrium in $L^2$. 
Uniform control on the coupling also allows us to obtain uniform control on the decay of the semi-group.
\begin{proposition}\label{prop:L-infty-decay}
Set $\widehat{D}(T):= \int_T^\infty (t+1)^{d+1}\norm{\theta_t}_{L^\infty(\mu)}^{\frac12}\,dt$ and suppose $\widehat{D}(0)<\infty$. Then for all $f:\Omega\to\bR$ with $\normb{f}_1<\infty$ and all $T\geq 0$
\[ \norm{S_Tf - \mu(f)}_{\infty} \leq C_d^{\frac12} \widehat{D}(T) \normb{f}_1. \]
The constant $C_d$ is the same as in Theorem \ref{thm:main-result}.
\end{proposition}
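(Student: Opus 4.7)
The plan is to write $S_Tf(\eta)-\mu(f)$ as the tail of a time integral of $LS_tf(\eta)$ and then to bound the integrand pointwise by the same coupling arguments that drive Theorem \ref{thm:main-result}, but carried out in sup-norm. Under the hypothesis $\widehat{D}(0)<\infty$, the basic coupling bound $|\nabla_x S_tf(\eta)|\leq\sum_y\norm{\nabla_yf}_\infty\,\hP_{\eta^x,\eta}(\eta^1_t(y)\neq\eta^2_t(y))$ together with finite speed of propagation gives $\normb{S_tf}_1\to 0$ as $t\to\infty$, hence $\norm{S_tf-\mu(f)}_\infty\to 0$. For each fixed $\eta$ this justifies the representation
\[ S_Tf(\eta)-\mu(f) \,=\, -\int_T^\infty LS_tf(\eta)\,dt \]
with an absolutely convergent integral.

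The technical heart of the argument is a pointwise bound
\[ |LS_tf(\eta)| \,\leq\, C_d^{1/2}(t+1)^{d+1}\norm{\theta_t}_{L^\infty(\mu)}^{1/2}\normb{f}_1. \]
To establish it, I would expand $LS_tf(\eta)=\sum_x c(x,\eta)\nabla_xS_tf(\eta)$, use the coupling representation of $\nabla_xS_tf(\eta)$ together with the telescoping bound $|f(\eta^1_t)-f(\eta^2_t)|\leq\sum_y\norm{\nabla_yf}_\infty\ind[\eta^1_t(y)\neq\eta^2_t(y)]$, and swap the sums to obtain
\[ |LS_tf(\eta)| \,\leq\, \sum_y\norm{\nabla_yf}_\infty\sum_x c(x,\eta)\,\hP_{\eta^x,\eta}\bigl(\eta^1_t(y)\neq\eta^2_t(y)\bigr). \]
Finite speed of propagation restricts the inner $x$-sum to the $O((t+1)^d)$ sites in the influence cone of $y$, and a Cauchy--Schwarz split of the inner sum---exploiting that each single-spin coupling probability is at most $\norm{\theta_t}_{L^\infty(\mu)}$, so that its square absorbs an extra factor $\norm{\theta_t}_{L^\infty(\mu)}$---produces the square root $\norm{\theta_t}_{L^\infty(\mu)}^{1/2}$ with polynomial prefactor $(t+1)^{d+1}$. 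This mirrors the Dirichlet-form estimate that drives the proof of Theorem \ref{thm:main-result}, but carried out pointwise instead of after integration against $\mu$, which is exactly why the constant $C_d$ is inherited.

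Integrating the pointwise bound over $t\in[T,\infty)$ and taking the supremum in $\eta$ then gives
\[ \norm{S_Tf-\mu(f)}_\infty \,\leq\, C_d^{1/2}\normb{f}_1\int_T^\infty(t+1)^{d+1}\norm{\theta_t}_{L^\infty(\mu)}^{1/2}\,dt \,=\, C_d^{1/2}\widehat{D}(T)\normb{f}_1. \]
The main obstacle I foresee is the pointwise estimate above: choosing the right Cauchy--Schwarz split so that exactly the factor $\norm{\theta_t}_{L^\infty(\mu)}^{1/2}$ appears (instead of a full power $\norm{\theta_t}_{L^\infty(\mu)}$, which direct bookkeeping also yields) with the claimed polynomial prefactor and the constant $C_d^{1/2}$ from Theorem \ref{thm:main-result}. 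The representation step and the final integration are routine once that pointwise bound is in hand.
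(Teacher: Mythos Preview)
Your outline is essentially the paper's own argument: write $S_Tf-\mu(f)=-\int_T^\infty LS_tf\,dt$, bound $|LS_tf(\eta)|$ by $\sum_x|\nabla_xS_tf(\eta)|$, use the coupling together with the telescoping bound $|f(\eta^1_t)-f(\eta^2_t)|\leq\sum_y\delta_y(f)\ind_{\eta^1_t(y)\neq\eta^2_t(y)}$, and then a Cauchy--Schwarz to extract $\norm{\theta_t}_{L^\infty(\mu)}^{1/2}$. Where you are vague is exactly where the paper is specific, and the paper's choice is slightly different from what you sketch.

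The paper does \emph{not} apply Cauchy--Schwarz to the inner $x$-sum. Instead, after translation it reduces to a single coupling $\hP_{\eta^0,\eta}$ and bounds the total number of discrepancies by $|C_{t,0}|\,\ind_{\eta^1_t\neq\eta^2_t}$. The Cauchy--Schwarz is then taken under the coupling expectation:
\[
\hE_{\eta^0,\eta}\bigl[|C_{t,0}|\,\ind_{\eta^1_t\neq\eta^2_t}\bigr]
\;\leq\;\bigl(\hE\,|C_{t,0}|^2\bigr)^{1/2}\,\theta_t(\eta)^{1/2}
\;\leq\;C_d^{1/2}(t+1)^{d+1}\,\norm{\theta_t}_{L^\infty(\mu)}^{1/2},
\]
the last step by Lemma~\ref{lemma:clusterestimate}. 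This is precisely why the constant $C_d$ from Theorem~\ref{thm:main-result} and the prefactor $(t+1)^{d+1}$ appear: they are the square root of the second-moment bound $\int(2|C_{t,0}|+1)^2\,d\oN\leq C_d(t+1)^{2d+2}$ used there.

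Your alternative---Cauchy--Schwarz the single-site probability as $\hP_{\eta^x,\eta}(\eta^1_t(y)\neq\eta^2_t(y))\leq\bP(y-x\in B_t)^{1/2}\norm{\theta_t}_{L^\infty(\mu)}^{1/2}$ and then sum over $x$---also works, and via Lemma~\ref{lemma:firstpassagepercolation}(b) gives $c_2(t+1)^{d}\norm{\theta_t}_{L^\infty(\mu)}^{1/2}$, a slightly better polynomial but a different constant. So your route proves a variant of the proposition; to get literally $C_d^{1/2}(t+1)^{d+1}$ you should use the paper's split. Incidentally, your parenthetical that ``direct bookkeeping also yields'' a full power $\norm{\theta_t}_{L^\infty(\mu)}$ with a polynomial prefactor is not obviously correct: bounding each single-site probability by $\norm{\theta_t}_{L^\infty(\mu)}$ loses the summability in $x$, while bounding by $\bP(y-x\in B_t)$ loses the $\theta_t$ decay; the half-power is what falls out naturally.
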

A consequence of Proposition \ref{prop:L-infty-decay} is that there cannot be two different ergodic measures for the system.
Conditions which guarantee uniqueness of the ergodic measure $\mu$ are well-studied. An important condition of this kind is what is called Dobrushin-Shlosman mixing (DSM). In the literature there are many conditions which imply or  are equivalent to DSM. Just like $D_1(0)<\infty$, DSM also implies a spectral gap. In contrast to Theorem \eqref{thm:main-result}, DSM and equivalent or stronger conditions are all framed in the context of finite volumes. To be more concrete let $\cF$ denote all finite subsets of $\bZ^d$. For $\Lambda \in \cF$ and $\xi\in\Omega$, let $S_t^{V,\xi}$ and $\mu^{V,\xi}$ be the semi-group and stationary measure of the finite volume dynamics in $V$ with boundary condition $\xi$, that is spins outside of $V$ are frozen in configuration $\xi$. Let $\cC$ denote the continuous functions on $\Omega$.
\begin{theorem}[see \cite{YOSHIDA:97}]\label{thm:DSM}
 The following statements are equivalent:
\begin{enumerate}
\item {[DSM]} There are constants $c_1,c_2>0$ so that for any local function $f$,
 \begin{align}\label{eq:DSM}
\sup_{V \in \cF, \xi\in\Omega} \abs{\mu^{\Lambda, \xi^x}(f)-\mu^{\Lambda,\xi}(f)}\leq c_1 \norm{f}_\infty e^{-c_2 d(x,\supp(f))}
\end{align}
\item There exists $C,h:[0,\infty[\to[0,\infty[$ with $h(t)\in o(t^{-2(d-1)})$ as $t\to\infty$ so that for any $f\in\cC$,
\begin{align}\label{eq:DSM-L1}
\sup_{V \in \cF, \xi\in\Omega} \mu^{\Lambda, \xi^x}\left(\abs{S_t^{\Lambda,\xi}f-\mu^{\Lambda,\xi}(f)}\right)\leq C(\norm{f}_\infty+ \abs{\supp(f)}) h(t)
\end{align}
\item There exists $C:[0,\infty[\to[0,\infty[$ and $\lambda>0$ so that for any $f\in\cC$,
\begin{align}\label{eq:DSM-Linfty}
\sup_{V \in \cF, \xi\in\Omega} \norm{S_t^{\Lambda,\xi}f-\mu^{\Lambda,\xi}(f)}_\infty\leq C(\norm{f}_\infty+ \abs{\supp(f)}) e^{-\lambda t}
\end{align}
\end{enumerate}
\end{theorem}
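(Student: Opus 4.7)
The plan is to establish the cycle (a) $\Rightarrow$ (c) $\Rightarrow$ (b) $\Rightarrow$ (a). The easy leg is (c) $\Rightarrow$ (b): by Jensen's inequality $\mu^{\Lambda,\xi^x}(\abs{S_t^{\Lambda,\xi}f - \mu^{\Lambda,\xi}(f)}) \leq \norm{S_t^{\Lambda,\xi}f - \mu^{\Lambda,\xi}(f)}_\infty$, and $h(t) = e^{-\lambda t}$ manifestly satisfies $h(t)\in o(t^{-2(d-1)})$, so the constants $C$ in (b) can be taken to be those in (c).

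For (a) $\Rightarrow$ (c) I would follow the Stroock--Zegarlinski / Martinelli--Olivieri route. The uniform exponential spatial decay \eqref{eq:DSM} implies, via coarse-graining the finite volume $\Lambda$ into blocks of a fixed size and a recursive martingale decomposition of the Gibbs measure, a uniform-in-$(\Lambda,\xi)$ logarithmic Sobolev inequality for $\mu^{\Lambda,\xi}$. This gives exponential decay of $S_t^{\Lambda,\xi}$ in $L^2(\mu^{\Lambda,\xi})$, and the hypercontractivity which also follows from the log-Sobolev inequality upgrades this to exponential decay in $L^\infty$, losing only a factor polynomial in $\norm{f}_\infty + \abs{\supp(f)}$.

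The main obstacle is the bootstrap (b) $\Rightarrow$ (a). Here one starts from a weak average-sense decay with merely polynomial rate and must deduce strong uniform spatial exponential mixing. The natural strategy is a multi-scale renormalization in the spirit of Martinelli--Olivieri. I would choose boxes $\Lambda_L$ of side $L$ together with a time scale $t_L \asymp L^\alpha$, with $\alpha$ tuned so that $h(t_L) \cdot L^{2(d-1)}\to 0$; the hypothesis $h(t)\in o(t^{-2(d-1)})$ is precisely the minimal decay which allows such a choice of $\alpha$, and the exponent $2(d-1)$ reflects the surface-to-volume scaling governing how single boundary perturbations propagate through the finite-volume dynamics. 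With this scale separation in hand one compares the full dynamics in $\Lambda$ against a block dynamics on $\Lambda_L$-blocks; the $L^1$ bound in (b) forces the block dynamics to be essentially independent across boundaries of $\Lambda_L$, and iterating this comparison from the scale $L$ downwards produces a uniform-in-$(\Lambda,\xi)$ spectral gap, from which \eqref{eq:DSM} follows by taking $f(\eta)=\eta(y)$ and $t\to\infty$ in the resulting version of (c). The delicate point is quantitatively controlling the dependence of the block-dynamics error on the prefactor $C(\norm{f}_\infty + \abs{\supp(f)})$ appearing in (b); this is exactly where the sharpness of the exponent $2(d-1)$ is used, and it is the step I expect to require the most care.
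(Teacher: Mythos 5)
The paper does not prove this theorem at all: it is quoted as a known result, with the proof deferred to the cited reference \cite{YOSHIDA:97} (and the surrounding Dobrushin--Shlosman-mixing literature, e.g.\ \cite{MARTINELLI:OLIVIERI:94}). So there is no internal argument to compare yours against; the only fair comparison is with the literature route, which your cycle (a)$\Rightarrow$(c)$\Rightarrow$(b)$\Rightarrow$(a) indeed mirrors. The leg (c)$\Rightarrow$(b) is trivially correct as you state it, and your plan for (a)$\Rightarrow$(c) --- DSM $\Rightarrow$ uniform finite-volume log-Sobolev inequality via a block/martingale decomposition, then $L^2$ decay upgraded to $L^\infty$ decay by hypercontractivity --- is the standard Stroock--Zegarlinski/Martinelli--Olivieri path.

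However, as a proof the proposal has a genuine gap, and it sits exactly where the cited theorem's content lies: the bootstrap (b)$\Rightarrow$(a). You name a multiscale comparison with block dynamics and assert that ``the $L^1$ bound in (b) forces the block dynamics to be essentially independent across boundaries,'' but this is the step that actually has to be carried out quantitatively: one must convert an averaged (single-site-perturbed boundary, $L^1$ in the stationary measure) polynomial bound at the threshold rate $o(t^{-2(d-1)})$ into a spatial estimate that is exponential in $d(x,\supp f)$ and uniform over \emph{all} finite volumes $V\in\cF$ and all boundary conditions, not merely over cubes or regular volumes at the renormalization scales --- the paper's own remark after the theorem stresses that restricting $\cF$ changes the strength of the condition, so this uniformity is not cosmetic. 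Your heuristic for the exponent $2(d-1)$ (surface-to-volume scaling of boundary perturbations) is plausible but is offered in place of the estimate rather than as one, and the claimed recursion producing a uniform spectral gap is not set up (no choice of scales, no error propagation between scales, no treatment of the prefactor $C(\norm{f}_\infty+\abs{\supp(f)})$ beyond flagging it as delicate). Likewise, in (a)$\Rightarrow$(c) the derivation of a uniform log-Sobolev constant from \eqref{eq:DSM} is invoked as a black box. In short: the roadmap is the right one and matches the source the paper cites, but the two substantive implications are sketches that defer precisely the hard quantitative work, so the proposal does not constitute a proof of the stated equivalence.
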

\begin{remark}
It is possible to restrict $\cF$ to a sub-class of finite volumes. Take for example $\cF$ as all sets which consist of unions of big cubes. This is in general strictly weaker than taking all finite volumes. 
\end{remark}
It should should also be remarked that there are many more conditions than the three presented in Theorem \ref{thm:DSM}, see for example \cite{MARTINELLI:OLIVIERI:94}. 


The perhaps remarkable but by now well-known fact that polynomial decay of the semi-group can imply exponential decay is well exhibited by Theorem \ref{thm:DSM}, but was first proved in \cite{HOLLEY:84,AIZENMAN:HOLLEY:87}. In particular it was shown there that in the attractive setting the decay of the coupling parameter $\rho_t$ faster than $t^{-d}$ implies exponential decay of the semi-group. Comparing this to Theorem \ref{thm:auto-correlation}, the difference between the necessary decay rates is obvious. While the term $t^{3d+2}$ is probably not optimal, a direct comparison to the result of \cite{HOLLEY:84} is not possible, since $\rho_t$ is a term uniform over all configurations, while Theorem \ref{thm:auto-correlation} requires $p>0$. Also, the dichotomy between either exponential or slow decay no longer exists in the non-uniform case, as the low-temperature Ising model shows. 

It is natural to compare the conditions of \ref{thm:main-result} to DSM, particularly condition \eqref{eq:DSM-L1}. Instead of the coupling condition $D_p(0)<\infty$ an $L^1$-mixing condition in finite volumes is required, also with a polynomial decay. However, the supremum over finite volumes with a uniform control on the boundary is significant. Even though \eqref{eq:DSM-L1} looks more like an $L^1$-condition, since it is equivalent to \eqref{eq:DSM-Linfty} it belongs to the uniform case. In the case of attractive spin-systems it does imply that $\theta_t(\eta)$ decays exponentially fast uniformly in $\eta$:
\begin{proposition}\label{prop:DSM}
If the spin-system is attractive and DSM is satisfied, then there are constants $C,\lambda>0$ so that $\norm{\theta_t}_{L^\infty(\mu)} \leq C e^{-\lambda t}$. Particularly, $D_1(0)<\infty$.
\end{proposition}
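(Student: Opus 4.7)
The plan is to reduce the single-site disturbance probability $\theta_t(\eta)$ to the extremal coupling parameter $\rho_t := \hP(\overline{+1}_t(0) \neq \overline{-1}_t(0))$, restricted to a spatial window of linear radius in $t$ via finite speed of propagation, and then to extract exponential decay of $\rho_t$ from DSM.

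First, I would exploit attractivity. Without loss of generality assume $\eta(0) = -1$, so $\eta^0 \geq \eta$. Because the basic coupling preserves the componentwise order for attractive dynamics, and $\overline{+1} \geq \eta^0 \geq \eta \geq \overline{-1}$, under the same graphical construction I get $\overline{+1}_t \geq \eta^0_t \geq \eta_t \geq \overline{-1}_t$ pointwise. Thus for every $y \in \bZ^d$,
\[
\{\eta^0_t(y) \neq \eta_t(y)\} \subseteq \{\overline{+1}_t(y) \neq \overline{-1}_t(y)\},
\]
and by translation invariance of the dynamics, $\hP(\overline{+1}_t(y) \neq \overline{-1}_t(y)) = \rho_t$. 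Summing this bound over all $y$ diverges, so I must first restrict to sites that the disturbance can plausibly reach. Standard Poisson large-deviation estimates for the graphical construction (the same ones behind the $(t+1)^{2d+2}$ weight in $D_p$) give a constant $C>0$ and $c_1>0$ such that
\[
\hP\bigl(\exists\, y \notin B_{Ct}:\, \eta^0_t(y) \neq \eta_t(y)\bigr) \leq e^{-c_1 t}
\]
uniformly in $\eta$. A union bound over $y \in B_{Ct}$ then yields
\[
\theta_t(\eta) \leq |B_{Ct}|\,\rho_t + e^{-c_1 t} \leq C_d (1+t)^d\, \rho_t + e^{-c_1 t}.
\]

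Second, I would extract exponential decay of $\rho_t$ from DSM. Apply condition \eqref{eq:DSM-Linfty} of Theorem \ref{thm:DSM} to the local function $f(\eta) = \eta(0)$ (with $\norm{f}_\infty = 1$ and $|\supp(f)| = 1$): it gives $\sup_{V,\xi} \norm{S_t^{V,\xi} f - \mu^{V,\xi}(f)}_\infty \leq 2C e^{-\lambda t}$. Since DSM implies uniqueness of the Gibbs measure, letting $V \uparrow \bZ^d$ along an increasing sequence of cubes makes $S_t^{V,\xi} f \to S_t f$ pointwise and $\mu^{V,\xi}(f) \to \mu(f)$, yielding the infinite volume bound $\sup_{\zeta \in \Omega} |S_t f(\zeta) - \mu(f)| \leq 2C e^{-\lambda t}$. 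In the attractive setting $\rho_t = \tfrac{1}{2}(S_t f(\overline{+1}) - S_t f(\overline{-1}))$, so $\rho_t \leq 2C e^{-\lambda t}$. Substituting into the previous display gives
\[
\norm{\theta_t}_{L^\infty(\mu)} \leq C_d (1+t)^d \cdot 2C e^{-\lambda t} + e^{-c_1 t} \leq C' e^{-\lambda' t}
\]
for suitable $C', \lambda' > 0$. Since $c_\infty = c(0,\cdot)^0 \equiv 1$, the polynomial weight $(t+1)^{2d+2}$ against this exponential decay is trivially integrable, so $D_1(0) < \infty$.

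The main technical point is the finite-to-infinite volume passage in the second step, i.e.\ transporting the uniform finite-volume DSM bound to the infinite-volume semigroup $S_t$ acting on the cylinder function $\eta \mapsto \eta(0)$. This is standard via the graphical construction plus uniqueness, but needs to be addressed carefully; everything else is just attractivity and the coupling estimates for the Poisson clocks.
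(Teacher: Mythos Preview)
Your argument is correct and follows essentially the same strategy as the paper: split sites into a near region (handled via DSM passed to the infinite-volume semigroup) and a far region (handled via Poisson/first-passage-percolation speed-of-propagation bounds), then combine. The only cosmetic difference is that you route the near-site contribution through the extremal coupling parameter $\rho_t = \tfrac{1}{2}(S_t f(\overline{+1}) - S_t f(\overline{-1}))$ and invoke uniqueness to identify the limiting measure, whereas the paper bounds $|S_t f_x(\eta^0) - S_t f_x(\eta)|$ directly by $2\sup_{L,\xi}\|S_t^{\Lambda_L,\xi}f_x - \mu^{\Lambda_L,\xi}(f_x)\|_\infty$ (so the finite-volume means cancel and uniqueness is not explicitly needed); both are equivalent once attractivity is in play.
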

It is not clear whether $D_1(0)<\infty$ implies DSM. If $\norm{\theta_t}_{L^\infty(\mu)}$ decays exponentially fast, then so does by Proposition \ref{prop:L-infty-decay} the semi-group in the supremum-norm. Except for the difference between finite and infinite volume it is like condition \eqref{eq:DSM-Linfty}. However, there is in general a strict difference between finite and infinite volume conditions as demonstrated by the so-called Czech models, which exhibit a phase transition in the half-space but not in the full space \cite{SHLOSMAN:86}. 

To conclude the discussion of the uniform case $p=1$, we provide a simple condition on the flip rates $c(x,\eta)$ so that $D_1(0)<\infty$. 
\begin{proposition}\label{prop:uniform-condition}
Suppose 
\begin{align}\label{eq:uniform-condition}
\alpha:=\sup_{\eta\in\Omega}\sum_{\abs{x}=1}\abs{c(x,\eta^0)-c(x,\eta)}<1 .
\end{align}
Then 
$ \norm{\theta_t}_{L^\infty(\mu)}\leq e^{-(1-\alpha)t} .$
\end{proposition}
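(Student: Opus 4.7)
The plan is to study the size $N_t := \abs{D_t}$ of the disagreement set $D_t = \{x \in \bZ^d : \eta_t^1(x)\neq\eta_t^2(x)\}$ under the basic coupling $\hP_{\eta^0,\eta}$ and to establish $\hE_{\eta^0,\eta}[N_t] \leq e^{-(1-\alpha)t}$; since $N_0 = 1$ (initially $D_0 = \{0\}$) and $\theta_t(\eta) = \hP_{\eta^0,\eta}(D_t\neq\emptyset) \leq \hE_{\eta^0,\eta}[N_t]$, this yields the desired bound uniformly in $\eta$.

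In the heat-bath graphical construction, each site $x$ carries an independent rate-$1$ Poisson clock and, at each ring, an independent uniform $U \in [0,1]$ shared by both copies; the new spin of copy $i$ at $x$ equals $+1$ iff $U \leq c_+(x, \eta^i)$. Because $c_+(x, \cdot)$ does not depend on $\eta(x)$ (Markov random field property), after a ring at $x$ the indicator $\ind_{x\in D}$ becomes Bernoulli$(\abs{c_+(x,\eta^1) - c_+(x,\eta^2)})$, independently of its value before the ring. Summing the per-site contributions yields the identity
\[ \ddt \hE_{\eta^0,\eta}[N_t] = \hE_{\eta^0,\eta}\left[\suml_{x\in\bZ^d} \abs{c_+(x,\eta_t^1) - c_+(x,\eta_t^2)} - N_t\right]. \]

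The heart of the proof is then the pointwise estimate
\[ \suml_{x\in\bZ^d} \abs{c_+(x,\eta^1) - c_+(x,\eta^2)} \leq \alpha \abs{D} \]
whenever $\eta^1,\eta^2$ have disagreement set $D$. By enumerating $D$ and telescoping $\eta^1 \to \eta^2$ one flip at a time via the triangle inequality (Tonelli handles possibly infinite sums), it suffices to prove $\suml_x \abs{c_+(x,\eta) - c_+(x,\eta^y)} \leq \alpha$ for every $y$. The MRF property kills all contributions with $\abs{x-y} \neq 1$; using $c_- = 1 - c_+$ together with translation invariance of the rates, the remaining neighbor sum rewrites as $\suml_{\abs{z}=1}\abs{c(z,\eta') - c(z,(\eta')^0)}$ for the translated configuration $\eta' = \tau_y \eta$, which is bounded by $\alpha$ via \eqref{eq:uniform-condition}.

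Inserting the pointwise bound into the generator identity gives $\ddt \hE[N_t] \leq -(1-\alpha)\hE[N_t]$, and Gr\"onwall combined with $N_0 = 1$ closes the argument. The delicate step is the translation-invariance bookkeeping in the pointwise estimate: one must carefully match the form of $\alpha$ as stated (sum over neighbors of the flip site $0$) with the form encountered after telescoping (sum over neighbors of an arbitrary flipped site $y$), and use the equality $\abs{c(x,\eta^0)-c(x,\eta)} = \abs{c_+(x,\eta^0)-c_+(x,\eta)}$ (valid when the spin at $x$ is unchanged, via $c_- = 1-c_+$) to pass from the heat-bath parameters $c_+$ governing the coupling to the flip rates $c$ appearing in the hypothesis.
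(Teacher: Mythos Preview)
Your proposal is correct and follows essentially the same route as the paper: both control $\norm{\theta_t}_\infty$ by the expected number of discrepancies $h(t)=\sup_\eta\hE_{\eta^0,\eta}\sum_x\ind_{\eta^1_t(x)\neq\eta^2_t(x)}$, and both reduce the key rate computation to the single-flip quantity $\sum_{|x|=1}|c(x,\eta^0)-c(x,\eta)|\leq\alpha$ via telescoping and translation invariance. The only cosmetic difference is that the paper packages the argument as submultiplicativity $h(t+s)\leq h(t)h(s)$ together with $h'(0^+)=\alpha-1$, whereas you run the generator bound at every time and invoke Gr\"onwall; these are two equivalent ways to integrate the same differential inequality.
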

Condition \eqref{eq:uniform-condition} compares favorably to the well-known Dobrushin uniqueness criterion \cite{DOBRUSCHIN:68} or Ligget's $M<\epsilon$ regime \cite[page 123]{LIGGETT:05}, which in this context both read
\[ \sum_{\abs{x}=1}\sup_{\eta\in\Omega}\abs{c(0,\eta^x)-c(0,\eta)}<1. \]

\subsection{The case $p>1$}
The example of the low-temperature Ising model shows that $D_p(0)<\infty$ does not imply uniqueness of the ergodic measure $\mu$ nor exponentially fast convergence to equilibrium. As such $D_1(0)<\infty$ and $D_p(0)<\infty,p>1$, are very distinct and exponential decay of the semi-group in $L^2$ is not guaranteed in the $p>1$ case. However, if $\norm{\theta_t}_{L^q(\mu)}$ decays exponentially fast Theorem \ref{thm:main-result} still implies exponentially fast decay of the variance, but with respect to a stronger norm than $\norm{\cdot}_{L^2(\mu)}$. This, however, is sufficient to prove a spectral gap of the generator $L$ or, equivalently, a Poincar\'e inequality.
\begin{proposition}\label{prop:spectral-gap}
Suppose $\int_0^\infty \norm{\theta_t}_{L^q(\mu)} e^{\lambda t}\,dt<\infty$ for some $\lambda>0$ and $1\leq q \leq \infty$. Then $]-\lambda/2,0[$ belongs to the resolvent set of $L$.
\end{proposition}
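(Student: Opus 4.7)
The plan is to convert the exponential integrability of $\norm{\theta_t}_{L^q(\mu)}$ into exponential decay of $D_p(T)$, plug this into Theorem~\ref{thm:main-result} on the dense class of local mean-zero functions, and translate the resulting pointwise variance decay into a spectral statement about $L$. Reversibility of the heat-bath dynamics makes $L$ self-adjoint and non-positive on $L^2(\mu)$, so the target conclusion is equivalent to showing that the projection-valued spectral measure $E$ of $L$, restricted to the mean-zero subspace $L^2_0(\mu)$, satisfies $E((-\lambda/2, 0)) = 0$.

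For the first step, fix $\epsilon \in (0, \lambda)$, choose $K_\epsilon$ with $(t+1)^{2d+2} \leq K_\epsilon e^{\epsilon t}$, and use $\norm{c_q \theta_t}_{L^q(\mu)} \leq \norm{\theta_t}_{L^q(\mu)}$ together with $e^{\epsilon t} \leq e^{-(\lambda-\epsilon) T} e^{\lambda t}$ for $t \geq T$ to obtain
\[ D_p(T) \leq K_\epsilon e^{-(\lambda-\epsilon) T} \int_0^\infty e^{\lambda t} \norm{\theta_t}_{L^q(\mu)}\, dt =: C_\epsilon e^{-(\lambda-\epsilon) T}. \]
Applying Theorem~\ref{thm:main-result} with $p$ conjugate to $q$ to any local $f$ with $\mu(f) = 0$ then yields $\Var_\mu(S_T f) \leq C_f e^{-(\lambda-\epsilon) T}$ with $C_f := C_d C_\epsilon \sum_x \norm{(\nabla_x f)^2}_{L^p(\mu)} < \infty$.

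For the spectral step, represent $\Var_\mu(S_T f) = \int_{-\infty}^0 e^{2sT} d\langle E_s f, f\rangle$ (with no mass at $0$ since $\mu(f) = 0$) and restrict the integral to $I_{\epsilon,\delta} := [-(\lambda-\epsilon)/2 + \delta, 0)$ to obtain the lower bound $e^{(-(\lambda-\epsilon) + 2\delta) T} \norm{E(I_{\epsilon,\delta}) f}^2$. Combined with the variance decay this forces $\norm{E(I_{\epsilon,\delta}) f}^2 \leq C_f e^{-2\delta T} \to 0$, hence $E(I_{\epsilon,\delta}) f = 0$ for every local mean-zero $f$; density of such functions in $L^2_0(\mu)$ and boundedness of the projection then give $E(I_{\epsilon,\delta}) = 0$ as an operator. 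Sending $\delta_n \downarrow 0$ and $\epsilon_n \downarrow 0$ through countable sequences and using $\sigma$-additivity of $E$ along the increasing union $\bigcup_{n,k} I_{\epsilon_n, \delta_k} = (-\lambda/2, 0)$ yields $E((-\lambda/2, 0)) = 0$, i.e.\ $(-\lambda/2, 0) \subset \rho(L)$. The main obstacle is this last spectral limit, namely ensuring that the density and $\sigma$-additivity arguments produce a vanishing projection on the full open interval rather than just on its half-open approximations; this is standard once one works with countable monotone sequences and then the earlier steps reduce to bookkeeping.
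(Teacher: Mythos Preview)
Your proof is correct and follows essentially the same route as the paper: derive exponential decay of $\Var_\mu(S_Tf)$ for local mean-zero $f$ from Theorem~\ref{thm:main-result}, convert this into vanishing of the spectral measure on $(-\lambda/2,0)$ via the spectral theorem, and extend by density. The only notable difference is in the density step: the paper passes to the limit using weak convergence of the scalar spectral measures $E_{f_n,f_n}\Rightarrow E_{f,f}$ together with the Portmanteau theorem, whereas you use the boundedness of the spectral projection $E(I_{\epsilon,\delta})$ directly, which is arguably cleaner.
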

In fact, we can say even more about the connection between $\norm{\theta_t}_{L^q(\mu)}$ and the Poincar\'e inequality.
\begin{proposition}\label{prop:reverse-spectral-gap}
Suppose the spin system is attractive and $\inf_{\eta\in\Omega}c(\eta,0)>0$. If the spin system satisfies the Poincar\'e inequality, then $\norm{\theta_t}_{L^q(\mu)}$ decays exponentially fast for any $1\leq q < \infty$.
\end{proposition}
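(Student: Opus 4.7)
The plan is to split $\theta_t$ into a near-origin and a far-field part, handling the former via Poincar\'e and the latter via finite propagation of the basic coupling. Since the spin system is attractive, the two coupled processes starting from $\eta^0$ and $\eta$ remain ordered and the disagreement set $\Delta_t := \{x : \eta_t^1(x) \neq \eta_t^2(x)\}$ has the property $|\eta_t^1(x)-\eta_t^2(x)| = 2\ind_{\{x\in\Delta_t\}}$. Writing $f_x(\eta) := \eta(x)$ and applying this under $\hP_{\eta^0,\eta}$ yields $|\nabla_0 S_tf_x(\eta)| = 2\hP_{\eta^0,\eta}(x\in\Delta_t)$, and from the first-moment bound
\[
\theta_t(\eta) = \hP_{\eta^0,\eta}(\Delta_t\neq\emptyset) \leq \hE_{\eta^0,\eta}|\Delta_t| = \tfrac12\sum_{x\in\bZ^d}|\nabla_0 S_tf_x(\eta)|.
\]
By Minkowski's inequality, $\norm{\theta_t}_{L^q(\mu)} \leq \tfrac12\sum_x \norm{\nabla_0 S_tf_x}_{L^q(\mu)}$, and this sum is what remains to be controlled.

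I would split the sum at a cone $|x|\leq Kt$ for a large constant $K$. The outside contribution is handled by the graphical construction: a discrepancy initially at $0$ can only reach $x$ after at least $|x|$ successive clock rings propagating it, the total number of which by time $t$ is stochastically dominated by a Poisson variable of parameter linear in $t$. For $K$ large enough one obtains $\sup_\eta\hP_{\eta^0,\eta}(x\in\Delta_t)\leq e^{-c|x|}$ for all $|x|\geq Kt$, so $\sum_{|x|>Kt}\norm{\nabla_0 S_tf_x}_{L^q(\mu)}$ is exponentially small in $t$. For the inside, where there are $O(t^d)$ lattice points, I would set $g_x := f_x-\mu(f_x)$ so that $\nabla_0 S_tf_x(\eta) = S_tg_x(\eta^0)-S_tg_x(\eta)$ and transfer between $\eta$ and $\eta^0$ by change of variables: under $\inf_\eta c(0,\eta)>0$ the conditional distribution of $\eta(0)$ given the remaining spins is uniformly non-degenerate, so the involution $\eta\mapsto\eta^0$ pushes $\mu$ forward with uniformly bounded Radon-Nikodym derivative $M$. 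Hence $\norm{\nabla_0 S_tf_x}_{L^q(\mu)}\leq(1+M^{1/q})\norm{S_tg_x}_{L^q(\mu)}$. Since $g_x$ is mean-zero and bounded by $2$, the Poincar\'e inequality gives $\norm{S_tg_x}_{L^2(\mu)}\leq 2e^{-\lambda t}$, and interpolating this with the uniform bound $\norm{S_tg_x}_\infty\leq 2$ produces $\norm{S_tg_x}_{L^q(\mu)}\leq C e^{-\lambda_q t}$ with $\lambda_q := 2\lambda/\max(q,2)>0$. Combining the cone contribution of size $O(t^de^{-\lambda_q t})$ with the exponentially small far-field part yields exponential decay of $\norm{\theta_t}_{L^q(\mu)}$.

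I expect the commutation step to be the main obstacle: there is no algebraic identity relating $\nabla_0 S_t$ to $S_t\nabla_0$ on the single-spin observables $f_x$, and one must convert $L^q$-control on $S_tg_x$ into $L^q$-control on its discrete derivative. The lower bound $\inf_\eta c(0,\eta)>0$ is precisely what makes this legitimate, via the bounded Radon-Nikodym density of the flip at $0$; without it, the change of variables $\eta\mapsto\eta^0$ can fail to be absolutely continuous with bounded density and the argument would break down. The remaining ingredients---Poisson large-deviation tails for the graphical propagation and $L^2$--$L^\infty$ interpolation for the Markov semigroup---are standard, and the restriction $q<\infty$ in the conclusion reflects that the interpolation loses its exponential rate as $q\to\infty$.
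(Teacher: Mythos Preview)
Your proof is correct and follows essentially the same route as the paper: both exploit attractiveness to write $\hP_{\eta^0,\eta}(\eta_t^1(x)\neq\eta_t^2(x))=\tfrac12|\nabla_0 S_tf_x(\eta)|$, separate a near-field and a far-field contribution (the paper via Cauchy--Schwarz against $\ind_{x\in B_t}$, you via an explicit cone split), use the propagation bound of Lemma~\ref{lemma:firstpassagepercolation} for the far part, the change of variables $\eta\mapsto\eta^0$ (justified by detailed balance and $\inf c(0,\cdot)>0$) to pass from $\nabla_0 S_tf_x$ to $S_tg_x$, and the Poincar\'e inequality to get exponential decay of $\|S_tg_x\|_{L^2(\mu)}=\phi(t)^{1/2}$. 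The paper simply cites the intermediate estimate from the proof of Theorem~\ref{thm:auto-correlation}, which packages exactly these steps.

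The one technical difference worth noting is how $q>1$ is handled. You keep $L^q$ throughout via Minkowski and then interpolate $L^2$ with $L^\infty$. The paper instead uses the trivial pointwise bound $\theta_t^q\leq\theta_t$ (since $0\leq\theta_t\leq1$) to reduce immediately to $q=1$, which is slightly more economical and avoids the interpolation step altogether. Either way the conclusion follows.
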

This shows that for attractive spin systems equivalence between exponential decay of $\norm{\theta_t}_{L^q(\mu)}, 1\leq q<\infty,$ and the existence of a spectral gap. However, is makes no statement about $\norm{\theta_t}_{L^\infty(\mu)}$.

In the general (that is, not necessarily attractive) case we are left to discuss the situation where $D_1(0)=\infty$ and $D_p$ decays sub-exponentially fast.  In that case it is natural to compare Theorem \ref{thm:main-result} with a weak Poincar\'e inequality.  
\begin{proposition}\label{prop:weak-poincare}
Assume the conditions of Theorem \ref{thm:main-result} and $c(\eta,x)\geq \delta>0$. Then for all $f:\Omega\to\bR$ with $\sum_{x\in\bZ^d}\norm{(\nabla_xf)^2}_{L^p(\mu)}$ and all $t\geq 0$,
\begin{align*}
&\Var_\mu(S_t f)\leq C_d \delta^{-1} D_1(0,R)\cE(f,f) + D_p(R)\Phi_{R}(f),
\end{align*}
where 
\begin{align*}
D_p(0,R) &= \int_0^R (t+1)^{2d+2} \norm{\theta_t}_{L^p(\mu)},\\
\Phi_R( f) &= \frac{\Var_{\mu}(S_{R}f)}{D_p(R)} \leq C_d \suml_{x\in\bZ^d}\norm{(\nabla_xf)^2}_{L^p(\mu)}.
\end{align*}
\end{proposition}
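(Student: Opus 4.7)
The strategy is a short/long-time splitting at horizon $R$: the short-time piece is controlled by the Dirichlet form, while the long-time remainder $\Var_\mu(S_R f)$ is handled by Theorem \ref{thm:main-result} with parameter $p$. Since $\Var_\mu(S_t f)$ is non-increasing in $t$, it suffices to prove the claim at $t=0$. The bound on $\Phi_R(f) = \Var_\mu(S_R f)/D_p(R)$ is immediate from Theorem \ref{thm:main-result} applied at time $R$, and the homogeneity $\Phi_R(\lambda f)=\lambda^2\Phi_R(f)$ is built into the definition.

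For the short-time term, reversibility of the heat-bath dynamics with respect to $\mu$ furnishes the identity $\Var_\mu(f) - \Var_\mu(S_R f) = \int_0^R \cE(S_s f, S_s f)\,ds$. The core per-$s$ estimate I would establish is
\[\cE(S_s f, S_s f) \leq C_d\,(s+1)^{2d+2}\norm{\theta_s}_{L^1(\mu)}\sum_{x\in\bZ^d}\norm{(\nabla_x f)^2}_{L^1(\mu)}.\]
Integrating this over $s\in[0,R]$ yields $D_1(0,R)\sum_{x}\norm{(\nabla_x f)^2}_{L^1(\mu)}$ up to an absolute constant, and the hypothesis $c(x,\eta)\geq \delta$ converts $\sum_{x}\norm{(\nabla_x f)^2}_{L^1(\mu)}\leq \delta^{-1}\cE(f,f)$, producing exactly the first term claimed in the proposition.

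The per-$s$ inequality is the main obstacle and is the $p=1$ analogue of the coupling argument underlying Theorem \ref{thm:main-result}, now applied to $\sum_x\norm{(\nabla_x S_s f)^2}_{L^1(\mu)}$, which dominates $\cE(S_s f, S_s f)$ because $c\leq 1$. The graphical construction gives $\nabla_x S_s f(\eta) = \hE_{\eta^x,\eta}[f(\eta_s^1)-f(\eta_s^2)]$; nearest-neighbor interactions confine the disagreement set between $\eta_s^1$ and $\eta_s^2$ to a space-time cone of volume $O((s+1)^d)$, which is the source of the polynomial prefactor in $s$. Expanding $f(\eta_s^1)-f(\eta_s^2)$ as a telescoping sum of single-site gradients $\nabla_y f$ along the disagreement cluster, applying Cauchy--Schwarz, and invoking translation invariance of $\mu$ replaces the pointwise coupling probability $\theta_s(\eta)$ by $\norm{\theta_s}_{L^1(\mu)}$ and recovers $\sum_y\norm{(\nabla_y f)^2}_{L^1(\mu)}$ on the right. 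Once this lemma is in hand the remaining steps are routine bookkeeping.
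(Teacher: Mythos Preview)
Your strategy is essentially the paper's: split at horizon $R$, control the short-time piece via the $p=1$ coupling estimate and convert to the Dirichlet form using $c\geq\delta$, and absorb the remainder $\Var_\mu(S_Rf)$ into $D_p(R)\Phi_R(f)$ via Theorem~\ref{thm:main-result}. The paper does not re-derive the per-$s$ bound; it simply invokes Lemma~\ref{lemma:main-precursor} (the finite-interval version of Theorem~\ref{thm:main-result}) applied to $S_tf$ on $[0,R]$ with $p=1$, which immediately gives $\Var_\mu(S_tf)-\Var_\mu(S_RS_tf)\leq C_dD_1(0,R)\sum_x\|(\nabla_xS_tf)^2\|_{L^1(\mu)}$, and then uses $\cE(S_tf,S_tf)\leq\cE(f,f)$. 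Your reduction to $t=0$ by monotonicity of the variance is an equally valid shortcut.

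One point does need correcting. In your per-$s$ inequality you write $\|\theta_s\|_{L^1(\mu)}$ on the right. The H\"older step (Lemma~\ref{lemma:hardwork}) pairs $(\nabla_{x_k}f)^2$ in $L^p$ with $\theta_s$ in the conjugate $L^q$; for $p=1$ this is $q=\infty$, i.e.\ $\sup_\eta\theta_s(\eta)$, not the $L^1$ norm. Your sketch (``Cauchy--Schwarz \ldots\ replaces the pointwise coupling probability $\theta_s(\eta)$ by $\|\theta_s\|_{L^1(\mu)}$'') does not produce an $L^1$--$L^1$ pairing: the configurations $\tilde\xi_{k-1}$ are $\mu$-distributed but \emph{not} independent of $\eta$, so one cannot factor the integral. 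The displayed definition of $D_p(0,R)$ in the proposition is itself inconsistent with the paper's convention for $D_p(T)$ (which uses the $L^q$ norm of $\theta_t$); in the proof, $D_1(0,R)$ must be read with the $L^\infty$ norm, matching Lemma~\ref{lemma:main-precursor}. With that adjustment your argument is correct and coincides with the paper's.
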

This weak Poincar\'e inequality leads (with a minor modification of the proof in \cite{ROECKNER:WANG:01}) to 
\[ \Var_\mu(S_Tf) \leq \xi(T) \left(C_d \suml_{x\in\bZ^d}\norm{(\nabla_xf)^2}_{L^p(\mu)} + \Var_\mu(f)\right). \]
The decay $\xi(T)$ is of order $D_p(T^{\frac{1}{2d+3}})$, which is worse than the one from Theorem \ref{thm:main-result}. The reason for that is that in the weak Poincar\'e inequality the diverging $D_1(0,R)$ is partially used, while Theorem \ref{thm:main-result} makes only use of the converging $D_p(0,R)$.

\section{Graphical construction}\label{section:graphical-construction}
The graphical construction of the Glauber heat bath dynamics is the encoding of the random evolution of the process $\eta_t$ into basic random components and a deterministic function of this randomness and the initial configuration. It is a well-known tool in the study of spin and particle systems.

Let $\oN$ be a Poisson point process on $\bZ^d\times[0,\infty[$ with intensity one(wrt. the counting measure on $\bZ^d$ and the Lebesgue measure on $[0,\infty[$). A point $(x,t)\in\oN$ represents a \emph{chance} of flipping the spin at site $x$ and time $t$. To realize this chance let $\oU=(\oU_n)_{n\in\bN}$ be a countable iid. collection of $[0,1]$-uniform random variables independent of $\oN$. We assume that to each $(x,t)\in\oN$ there is an associated $U$ from $\oU$ (which can be realized by a bijection from $\oN$ to $\bN$, and we simply write $\oU:\oN\to[0,1]$). We denote the expectation with respect to $\oN$ and $\oU$ by $\int d\oN$ and $\int d\oU$.

The elementary step is then as follows. Given the configuration $\eta_{t-}$ before a possible flip at $(x,t)\in\oN$ and the to $(x,t)$ associated random variable $U=\oU((x,t))$  we determine the configuration $\eta_t$ after the possible flip deterministically. All sites $y\in\bZ^d,y\neq x,$ are unchanged, i.e., $\eta_t(y)=\eta_{t-}(y)$. If $U<c_+(x,\eta_{t-})$, then $\eta_t(x)=+1$, otherwise $\eta_t(x)=-1$. Since we ignore the original spin at $x$ and simply replace it with a new one drawn according to conditional probability given the other spins we call this a \emph{resampling event}.

The configuration $\eta_t$ is then given by the successive application of all resampling events to the initial configuration $\eta_0$. As those are infinitely many steps one has to take care that this is indeed well-defined. The goal is to define a deterministic function $\Psi$ which will output the configuration at time $t$, $\eta_t$, given the inputs $\oN,\oU$ and $\eta_0$. We now focus on the precise construction of the graphical representation and its properties.

For a single resampling event the definition of $\Psi$ is simple. Let $\Psi:\Omega\times (\bZ^d\times[0,\infty[\times[0,1])\to \Omega$ be given by
\begin{align*} 
\Psi(\eta,(x,t,u)) (y) := 
\begin{cases}
+1,\quad &y=x, c_+(x,\eta)\leq u;	\\
-1,\quad &y=x, c_+(x,\eta)>u;	\\
\eta(y),\quad &y\neq x.
\end{cases} 
\end{align*}
This definition is directly extended recursively to a finite number of resampling events. For $(x_n,t_n,u_n)_{1\leq n\leq N} \subset \bZ^d\times [0,\infty[\times[0,1]$ with $t_1<t_2<...<t_N$,
\begin{align*}
&\Psi\left(\eta, (x_n,t_n,u_n)_{1\leq n\leq N}\right) := \Psi\left(\Psi\left(\eta,(x_1,t_1,u_1)\right), (x_n,t_n,u_n)_{2\leq n\leq N}\right),
\end{align*}
and $\Psi(\eta,\emptyset)=\eta$. 
\begin{definition}\label{definition:resampling-order}
Let $G$ be a countable subset of $\bZ^d\times[0,\infty[$.
\begin{enumerate}
\item A partial order $<_G$ on $\bZ^d\times[0,\infty[$ is defined as follows: $(x,t)<_G (y,s)$ iff either $x=y$ and $t<s$ or there exists a finite subset $\{(x_1,t_1),\ldots(x_K,t_K)\}\subset G$ such that $t<t_{1}<t_{2}<\ldots<t_{K}\leq s$ and $\abs{x_{m}-x_{{m-1}}}=1$, $2\leq m \leq K$, as well as $\abs{x_{1}-x}=1$ and $x_{K}=y$.
\item Write $T_x:=\sup\{t:(x,t)\in G\},x\in\bZ^d,$ and 
$G_{<x}:=\{(y,t)\in G:(y,t)\leq_G (x,T_x)\}$. We call $G$ \emph{locally finite}, if 
$\abs{G_{<x}}<\infty$ for all $x\in\bZ^d$.
\item For $G^U$ a countable subset of $\bZ^d\times[0,\infty[\times[0,1]$ the definitions a) and b) are copied in the canonical way(projection of $G^U$ onto $\bZ^d\times[0,\infty[$).
\end{enumerate}
\end{definition}
The purpose of this definition becomes transparent by the following fact.
\begin{lemma}
For any $G^U \subset \bZ^d\times[0,\infty[\times[0,1]$ finite, $x\in\bZ^d$ and $\eta\in\Omega$,
\[ \Psi(\eta, G^U)(x) = \Psi(\eta,G^U_{<x})(x) .\]
\end{lemma}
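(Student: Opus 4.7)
The plan is to induct on the quantity $n := |G^U \setminus G^U_{<x}|$, reducing an arbitrary $G^U$ to its causally relevant subset $G^U_{<x}$ one event at a time. The base case $n = 0$ gives $G^U = G^U_{<x}$ and there is nothing to prove. For the inductive step, I pick any $(y,s,u) \in G^U \setminus G^U_{<x}$, set $G' := G^U \setminus \{(y,s,u)\}$, and observe immediately that $y \neq x$: otherwise $s \leq T_x$ by definition of $T_x$ would force $(x,s) \leq_G (x,T_x)$ and hence $(y,s,u) \in G^U_{<x}$.

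The inductive step then decomposes into two claims: (a) $G'_{<x} = G^U_{<x}$, and (b) $\Psi(\eta, G^U)(x) = \Psi(\eta, G')(x)$. Claim (a) is a chain-surgery argument: since $y \neq x$ we have $T_x^{G'} = T_x^{G^U}$, and any chain in $G^U$ witnessing $(w,t) \leq_G (x,T_x)$ cannot pass through $(y,s)$ — otherwise its suffix would witness $(y,s) <_G (x,T_x)$, contradicting $(y,s,u) \notin G^U_{<x}$. Thus every such chain already lies in $G'$.

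Claim (b) is the heart of the proof. I order the events of $G^U$ chronologically, couple the two evolutions generated by $G^U$ and $G'$, and track the disagreement set $D_k := \{z : \eta^k(z) \neq \xi^k(z)\}$. Initially $D_k = \emptyset$; the skipped step at $(y,s,u)$ can add at most $y$ to $D$; and subsequently, when an event at $x_k$ is processed, $x_k$ can enter $D_k$ only when some neighbor of $x_k$ already lies in $D_{k-1}$, because the Markov random field property makes $c_+(x_k,\cdot)$ depend only on the values at nearest neighbors of $x_k$. A sub-induction on the step count then shows that every $z \in D_k \setminus \{y\}$ is the endpoint of a chain $(y_1,r_1),\dots,(y_m,r_m) \in G^U$ with $s < r_1 < \cdots < r_m \leq t_k$, $|y_1 - y| = 1$, $|y_i - y_{i-1}| = 1$, and $y_m = z$. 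Applied to $z = x$, such a chain (possibly extended to $(x,T_x)$ via the first clause of $<_G$) would force $(y,s) \leq_G (x,T_x)$, again contradicting $(y,s,u) \notin G^U_{<x}$. Hence $x \notin D_N$, proving (b).

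Combining, $\Psi(\eta, G^U)(x) = \Psi(\eta, G')(x) = \Psi(\eta, G'_{<x})(x) = \Psi(\eta, G^U_{<x})(x)$, where the middle equality is the inductive hypothesis applied to $G'$ (valid because $|G' \setminus G'_{<x}| = n-1$ by (a)) and the last equality is (a). The main obstacle is the sub-induction inside (b): one has to align the combinatorial growth rule of the disagreement set (a site enters only when a neighbor is already present) precisely with the definition of $<_G$, and in particular verify that the first link of the extracted chain satisfies $|y_1 - y| = 1$ rather than some weaker condition. Everything else is bookkeeping on the partial order $<_G$ and on the elementary flip rule defining $\Psi$.
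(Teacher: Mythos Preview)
Your proof is correct and rests on the same key observation as the paper's --- that the nearest-neighbour dependence of $c_+$ confines the causal influence on the spin at $x$ to the events in $G^U_{<x}$. The paper's own proof is a two-sentence sketch tracing dependencies backward from $(x,T_x)$; your version is a rigorous formalization going the other way, inducting on $|G^U\setminus G^U_{<x}|$ and propagating the disagreement set forward in time, which is a perfectly valid (and more detailed) route to the same conclusion.
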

\begin{proof}
The nearest-neighbour property of $c_+$ means that to determine the new spin after a resampling event $(x,t)$ it is sufficient to know the spin value of the neighbours of $x$. Those might depend on earlier resampling events, which have again nearest neighbour dependencies, and all resampling events $(y,s)$ which have an influence on $(x,t)$ satisfy $(y,s)<_G(x,t)$. 
\end{proof}
This leads is to the final definition of $\Psi$. For $G^U$ a locally finite subset of $\bZ^d\times[0,\infty[\times[0,1]$ (or $G\subset \bZ^d\times[0,\infty[, U:G\to[0,1], G^U:=\{(x,t,U(x,t)):(x,t)\in G\}$),
\[ \Psi(\eta,G^U)(x) := \Psi(\eta,G^U_{<x})(x),\quad x\in\bZ^d. \]
An important property of the graphical construction evident here is that $\Psi$ is tolerant to certain changes in the order of resampling events. Intuitively, a resampling event $(x,t)$ is influenced only by resampling events which happen before $t$ and are not too distant from $x$. This intuition can be formalized via the ordering $>_G$, which we now do.
\begin{lemma}\label{lemma:reordering}
Let $G^U\subset \bZ^d\times[0,\infty[\times[0,1]$ be locally finite and $A,B \subset G^U$ a partition of $G^U$ such that $\forall\, (x_1,t_1,u_1)\in A, (x_2,t_2,u_2)\in B: (x_1,t_1) \ngtr_{G} (x_2,t_2)$. In words, $A$ does not happen after $B$. Then
\[ \Psi(\eta, G^U) = \Psi\left(\Psi(\eta,A), B\right). \]
\end{lemma}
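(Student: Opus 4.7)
The plan is to reduce the claim site-by-site to a finite event set and then establish the finite version by a bubble-sort rearrangement. Fix $x\in\bZ^d$ and set $H := G^U_{<x}$, which is finite by local finiteness. By definition, $\Psi(\eta,G^U)(x) = \Psi(\eta,H)(x)$. The right-hand side also reduces to $H$: using the hypothesis, any $G$-path from $(z,r)\in B\cap H$ to $(x,T_x)$ can pass only through $B$-events, since an intermediate $A$-event would be $>_G (z,r)$; thus $B_{<_B x} = B\cap H$. Similarly, for each site $y$ whose $\Psi(\eta,A)$-value is consulted during the sequential $B$-computation, every $A$-event contributing to $\Psi(\eta,A)(y)$ can be shown to lie in $H$ (the $A$-path to $(y,T_y^A)$ can be extended by a single step $(y,T_y^A)\to(z,r)$ for a suitable $(z,r)\in B\cap H$ with $|y-z|=1$, after noting that $T_y^A<r$ by the hypothesis). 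So we may assume $G^U = H$ is finite, and the hypothesis descends to $A\cap H$, $B\cap H$.

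For finite $G^U$, define $\tilde\Psi(\eta,\sigma)$ for any sequence $\sigma$ of events by sequentially applying each resampling rule in listed order, ignoring time labels. Then $\tilde\Psi(\eta,\sigma) = \Psi(\eta,G^U)$ when $\sigma$ is the time-ordered listing of $G^U$, and $\tilde\Psi(\eta,\sigma') = \Psi(\Psi(\eta,A),B)$ when $\sigma'$ is the concatenation of the time-ordered listings of $A$ and then $B$. The elementary invariance is that swapping two adjacent entries at sites $y_1,y_2$ with $|y_1-y_2|>1$ leaves $\tilde\Psi$ unchanged: the two updates touch disjoint spins and, by the nearest-neighbour structure of $c_+$, neither rate depends on the other's site. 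Transform $\sigma$ into $\sigma'$ by a bubble sort that repeatedly swaps an adjacent pair $(\alpha,\beta)$ in the current order with $\alpha\in B$, $\beta\in A$; this preserves the internal time-orderings of $A\cap H$ and $B\cap H$ and terminates at $\sigma'$.

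The main obstacle is to verify that every such swap is between spatially independent sites. Since the swap direction is always $B$-before-$A$ to $A$-before-$B$, any given cross-pair can be swapped at most once, so at the moment it is swapped its current relative order $\alpha$-before-$\beta$ still coincides with the original time order, giving $t_\alpha<t_\beta$. If $y_\alpha=y_\beta$ or $|y_\alpha-y_\beta|=1$, the single-event path $x_1=y_\beta$, $t_1=t_\beta$ witnesses $\alpha<_G\beta$, contradicting the hypothesis $\beta\ngtr_G\alpha$. Hence $|y_\alpha-y_\beta|>1$, every swap is valid, and $\tilde\Psi(\eta,\sigma) = \tilde\Psi(\eta,\sigma')$. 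This gives the lemma at $x$, and since $x$ was arbitrary, the proof is complete.
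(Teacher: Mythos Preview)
Your proof is correct and follows essentially the same approach as the paper: reduce to a finite event set via $G^U_{<x}$, then rearrange the time-ordered sequence into the $A$-then-$B$ order by adjacent transpositions, each of which involves sites at distance $>1$ because the alternative would produce an $A$-event $>_G$ a $B$-event. Your reduction step is in fact more carefully justified than the paper's one-line ``restrict to $G^U_{<x}$'' (you verify that the right-hand side $\Psi(\Psi(\eta,A),B)(x)$ also localizes to $H$), and your bubble-sort formulation is a minor variant of the paper's ``move $a_1,a_2,\ldots$ to the front'' iteration.
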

\begin{proof}
Assume $G^U$ is finite. If not, restrict to $G^U_{<x}$.

The proof is a consequence from the following basic fact. For $(x_i,t_i,u_i)\in \bZ^d\times[0,\infty[\times[0,1],i=1,2$, with $\abs{x_1-x_2}>1$,
\begin{align}
\Psi(\eta,\{(x_1,t_1,u_1),(x_2,t_2,u_2)\}) &= \Psi(\Psi(\eta,(x_1,t_1,u_1)),(x_2,t_2,u_2)). \label{eq:reorderingfact}
\end{align}

By the property of the decomposition for each $(x_1,t_1,u_1)\in A, (x_2,t_2,u_2)\in B$, either $t_1<t_2$ or $\abs{x_1-x_2}>1$. The proof of the lemma is an iterative application of fact \eqref{eq:reorderingfact}.
Let $a_i, i=1..\abs{A}$ be the elements of $A$ ordered in increasing time. Starting from $\Psi(\eta, A\cup B) = \Psi(\Psi(\eta,\emptyset),\{a_i : i=1,...,\abs{A} \}\cup B)$, we can use fact \eqref{eq:reorderingfact} to move $a_1$ past all resampling events in $B$ and perform this resampling event first:
\[ \Psi(\eta, A \cup B) = \Psi(\Psi(\eta,\{a_1\}), \{a_i : i=2,...,\abs{A} \}\cup B). \]
\begin{figure}[hbtp]
\begin{center}
\setlength{\unitlength}{1cm}
\begin{picture}(7,5.5)(0,0)
\put(0,0){\line(1,0){7}}
\put(0,5){\line(1,0){7}}

\put(1,0){\line(0,1){5}}
\put(2,0){\line(0,1){5}}
\put(3,0){\line(0,1){5}}
\put(4,0){\line(0,1){5}}
\put(5,0){\line(0,1){5}}
\put(6,0){\line(0,1){5}}

\put(1,4){\makebox(0,0){$\times$}}
\put(1,4){\makebox(0.6,0){$a_5$}}
\put(1,1){\makebox(0,0){$\times$}}
\put(1,1){\makebox(0.6,0){$a_2$}}
\put(2,2){\makebox(0,0){$\times$}}
\put(2,2){\makebox(0.6,0){$a_3$}}
\put(3,0.2){\makebox(0,0){$\times$}}
\put(3,0.2){\makebox(0.6,0){$a_1$}}
\put(6,2.1){\makebox(0,0){$\times$}}
\put(6,2.1){\makebox(0.6,0){$a_4$}}

\put(3,2.6){\circle*{.1}}
\put(3,2.6){\makebox(0.6,0){$b_2$}}
\put(3,4.5){\circle*{.1}}
\put(3,4.5){\makebox(0.6,0){$b_4$}}
\put(4,1.2){\circle*{.1}}
\put(4,1.2){\makebox(0.6,0){$b_1$}}
\put(5,3.7){\circle*{.1}}
\put(5,3.7){\makebox(0.6,0){$b_3$}}
\put(6,4.8){\circle*{.1}}
\put(6,4.8){\makebox(0.6,0){$b_5$}}

\end{picture}
\end{center}
\caption{\small Resampling events $a_1,...,a_5$ do not depend on $b_1,...,b_5$.}
\end{figure}
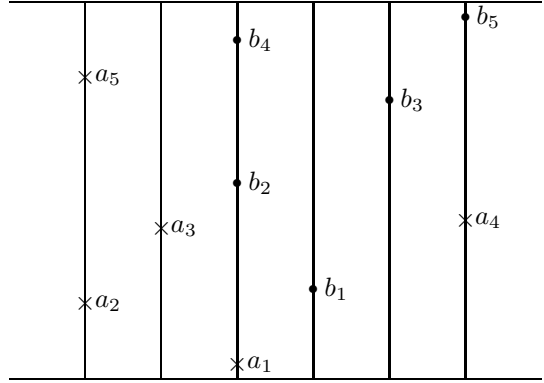

Repeating this procedure for all other elements of $A$ in their time-order then proves the claim of the lemma.
\end{proof}

The final proposition of this section sums up the properties of the graphical representation.
\begin{proposition}\label{prop:graphical-construction}
Let $f:\Omega\to\bR$ be quasi-local (that is $f$ can be uniformly approximated by local functions). 
The function $\Psi$ has the following properties:
\begin{enumerate}
\item $\int \int f\left(\Psi(\eta,\oN_t^\oU)\right) d\oU d\oN = S_t f(\eta)$ , 
where $\oN_t^\oU = \{ (x,s,u) \in \oN^\oU : s\leq t \} $; 
\item For any locally finite $G \subset \bZ^d\times[0,\infty[$, $\int \int f(\Psi(\eta,G^\oU)) \,d\oU \mu(d\eta) = \int f(\eta) \,\mu(d\eta)$;
\item For $\eta^1,\eta^2\in\Omega$ the coupling $\hP_{\eta^1,\eta^2}$ of $\bP_{\eta^1}$ and $\bP_{\eta^2}$ is defined via
\[ \hE_{\eta^1,\eta^2}f(\eta_t^1,\eta_t^2) = \int\int f\left(\Psi(\eta^1,\oN_t^\oU),\Psi(\eta^2,\oN_t^\oU)\right)\,d\oU d\oN. \]
\end{enumerate}
\end{proposition}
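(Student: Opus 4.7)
The strategy is to dispatch the three claims separately, relying in all cases on the almost sure local finiteness of the backward influence set $G_{<x}$ so that $\Psi(\eta, G^\oU)(x)$ is determined by a \emph{finite} composition of elementary resamplings. For $G = \oN_t$ this local finiteness is standard: the backward space-time cone from $(x, t)$ truncated to a compact time interval contains a.s.\ finitely many points of a rate-one Poisson process on $\bZ^d$, by sub-criticality of the associated branching. We may therefore work with $\Psi$ on a full-measure event without further comment.

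For (a), I would identify $\eta_t := \Psi(\eta, \oN_t^\oU)$ as a time-homogeneous Markov process by invoking the independence and stationarity of increments of $(\oN, \oU)$, and then compute its generator on a local function $f$ with support $\Lambda$ by a short-time expansion: over $[0, h)$ exactly one Poisson event $(x, s, u)$ with $x \in \Lambda$ occurs with probability $|\Lambda|\, h + o(h)$, and conditional on such an event the spin $\eta(x)$ flips iff the sampled $u$ forces a new value differing from $\eta(x)$. When $\eta(x) = -1$ this has probability $c_+(x, \eta) = c(x, \eta)$; when $\eta(x) = +1$ it has probability $c_-(x, \eta) = c(x, \eta)$. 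Summing gives the infinitesimal generator $\sum_{x \in \Lambda} c(x, \eta)\, \nabla_x f(\eta) = Lf(\eta)$, and uniqueness of the Feller process with generator $L$ then yields $\bE_\eta f(\eta_t) = S_t f(\eta)$.

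For (b), the essential ingredient is single-site invariance of $\mu$. If $\eta \sim \mu$ and $U \sim \mathrm{Unif}[0,1]$ is independent of $\eta$, then by construction the conditional law of $\Psi(\eta, (x, t, U))$ at site $x$ given the remaining spins is $c_+(x,\eta)\, \delta_{+1} + c_-(x,\eta)\, \delta_{-1}$; by the Markov random field property this coincides with the $\mu$-conditional law of $\eta(x)$, so $\Psi(\eta, (x, t, U)) \sim \mu$. For locally finite $G$ and a local $f$ with support $\Lambda$, the integral $\int f(\Psi(\eta, G^\oU))\, d\oU$ depends only on the finitely many resamplings in $\bigcup_{x \in \Lambda} G_{<x}$; ordering those chronologically and iterating single-site invariance yields the claimed identity for local $f$, and extension to bounded measurable $f$ (in particular quasi-local $f$) is by standard approximation.

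For (c) there is nothing substantive to prove: the formula is a definition, and what needs verification is that it defines a coupling of $\bP_{\eta^1}$ and $\bP_{\eta^2}$, which follows by applying (a) to functions depending on only one of the two coordinates. That the shared randomness $(\oN, \oU)$ makes this the basic graphical coupling is then tautological. The principal technical obstacle throughout is the infinite-volume book-keeping buried in "locally finite'': once one has a.s.\ finite backward influence cones and measurable dependence of $\Psi$ on $(\eta, \oN, \oU)$, every remaining argument reduces to the finite combinatorics of a bounded sequence of independent resampling events to which Lemma \ref{lemma:reordering} may be freely applied.
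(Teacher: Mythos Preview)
Your proposal is correct and follows essentially the same route as the paper: for (a) you identify $\Psi(\eta,\oN_t^\oU)$ as a Markov process and compute its generator to match $L$ on local functions (the paper writes the generator directly as $\widetilde{L}f(\eta)=\sum_x\int_0^1 [f(\Psi(\eta,(x,0,u)))-f(\eta)]\,du$ rather than via a short-time expansion, but the content is identical); for (b) both you and the paper reduce to single-site invariance under $\mu$ and then iterate over the finitely many relevant resamplings; and for (c) both simply invoke (a) on the marginals.
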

\begin{proof}
\par{a)} The point process $(\oN_t^\oU)_{t\geq 0}$ is a Markov process on the subsets of $\bZ^d\times[0,\infty[\times[0,1]$ under $d\oU\,d\oN$ and with respect to the canonical filtration. The image process
\[ \tilde\eta_t:= \Psi(\eta,\oN_t^\oU) \]
is also a Markov process since $\Psi$ preserves the Markov property:
\[ \tilde\eta_t = \Psi\left(\eta,\oN_t^{\oU}\right) = \Psi\left(\Psi(\eta, \oN_s^\oU), \oN_t^\oU \bs \oN_s^\oU\right) = \Psi\left(\tilde\eta_s, \oN_t^\oU \bs \oN_s^\oU\right), \quad t>s\geq0. \]
The generator of $\tilde\eta_t$ is
\begin{align}\label{eq:generator-graphical-representation} 
\widetilde{L} f(\eta) = \sum_{x\in\bZ^d} \int_0^1 f\left(\Psi(\eta,(x,0,u))\right)-f(\eta)\,du.	
\end{align}
Since $\Psi(\eta,(x,0,u))$ is either $\eta$ or $\eta^x$, after integrating over $u$ we obtain $\widetilde{L}f=Lf$ on the core of local functions $f:\Omega\to\bR$.
\par{b)} The proof follows the construction of $\Psi$. Let $G=\{(x,t)\}$ and write $\eta^x_+(x)=+1$,  $\eta^x_+(y) = \eta(y)$ for $y\neq x$ ($\eta^x_-$ analogue). Then 
\begin{align*}
\int \int f\left(\Psi(\eta,G^\oU)\right)\,d\oU\,\mu(d\eta) 
&= \int \int_0^1 f\left(\Psi(\eta,(x,t,u))\right)\,du\,\mu(d\eta) \\
&= \int c_+(x,\eta)f(\eta^x_+) + c_-(x,\eta)f(\eta^x_-)\,\mu(d\eta) \\
&= \int f(\eta)\,\mu(d\eta).
\end{align*}
For $G$ a finite set the result is true by the iterative construction. For $G$ countable but locally finite we observe that for local $f$ only finitely many resampling steps have to be performed to determine the expectation of $f$.
\par{c)} By part a) $\hE_{\eta^1,\eta^2}f(\eta_t^1) = S_tf(\eta^1)$ and $\hE_{\eta^1,\eta^2}f(\eta_t^2)=S_tf(\eta^2)$, so $\hP_{\eta^1,\eta^2}$ is indeed a coupling.
\end{proof}

\section{Proofs of the results}
The first step is to rewrite the variance. As the following formula holds fairly generally and not just in this setting we formulate the lemma with more abstract conditions.
\begin{lemma}\label{lemma:variance}
Let $\mu$ be an ergodic measure wrt. $S_t$ and $f:\Omega \to \bR$ such that $S_t f, (S_t f)^2 \in dom (L)$. Then, for $0\leq T < S \leq \infty$,
\begin{align} \label{eq:variance}
\Var_\mu(S_T f) - \Var_\mu(S_S f) &= \int_T^S \int \left[L(S_tf-S_t f(\eta))^2\right](\eta)\,\mu(d\eta)\,dt \\
&= \int_T^S \int \suml_{x\in\bZ^d} c(x,\eta) \left(S_tf(\eta^x)-S_t f(\eta)\right)^2\,\mu(d\eta)\,dt. \label{eq:variance2}
\end{align}
Note that by ergodicity $\lim_{S\to\infty}\Var_\mu(S_S f)=0$.
\end{lemma}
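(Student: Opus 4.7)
The plan is to differentiate $\Var_\mu(S_tf)$ in $t$, integrate, and then interpret the result in the two equivalent forms. Since $\mu$ is stationary, $\int S_tf\, d\mu$ is constant in $t$, so
\[ \frac{d}{dt}\Var_\mu(S_tf) \;=\; \frac{d}{dt}\int (S_tf)^2\, d\mu \;=\; 2\int (S_tf)\,L(S_tf)\, d\mu, \]
where the interchange of $\frac{d}{dt}$ and the integral is legitimized by the hypothesis $S_tf \in \dom(L)$ together with $\frac{d}{dt}S_tf = L S_tf$ in $L^2(\mu)$.

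Next I would invoke the carr\'e du champ identity: expanding $L(g^2)$ directly from the definition of $L$ yields
\[ L(g^2)(\eta) = 2g(\eta)Lg(\eta) + \suml_{x\in\bZ^d} c(x,\eta)\left(g(\eta^x)-g(\eta)\right)^2. \]
Applying this with $g=S_tf$, using $(S_tf)^2 \in \dom(L)$ and the invariance of $\mu$ to kill the integral of $L((S_tf)^2)$, and integrating from $T$ to $S$, I obtain exactly \eqref{eq:variance2}. The compact form \eqref{eq:variance} then follows from the observation that for fixed $\eta$ the function $\xi \mapsto (S_tf(\xi) - S_tf(\eta))^2$ vanishes at $\xi=\eta$, so applying $L$ to it and evaluating at $\eta$ reproduces precisely the $x$-sum in \eqref{eq:variance2}.

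For the final claim $\Var_\mu(S_Sf)\to 0$ as $S\to\infty$, I would use that the heat-bath rates make $L$ self-adjoint in $L^2(\mu)$ (detailed balance is built into the definition $c(x,\eta)=c_{-\eta(x)}(x,\eta)$), so the spectral theorem gives $S_tf \to P_{\ker L}f$ in $L^2(\mu)$; ergodicity forces $\ker L$ to consist of $\mu$-a.s.\ constants, hence $P_{\ker L}f = \mu(f)$ and $\Var_\mu(S_tf) = \|S_tf - \mu(f)\|_{L^2(\mu)}^2 \to 0$. The only real obstacle is the bookkeeping around the domain assumption that justifies differentiating under the integral and using $\int L(S_tf)^2\,d\mu = 0$; the identity itself is the standard stationary Dirichlet-form computation.
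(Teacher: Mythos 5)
Your proposal is correct and follows essentially the same route as the paper: differentiate $\Var_\mu(S_tf)$ using $S_tf\in\dom(L)$, use stationarity to insert $\int L\big((S_tf)^2\big)\,d\mu=0$, and identify the resulting integrand as the carr\'e du champ $\sum_x c(x,\eta)(\nabla_x S_tf)^2(\eta)$, i.e.\ $[L(S_tf-S_tf(\eta))^2](\eta)$. Your spectral-theorem justification of $\Var_\mu(S_Sf)\to0$ is a fine way to make precise what the paper simply attributes to ergodicity.
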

\begin{proof}
Since
\[\frac{d}{dt} \Var_\mu(S_t f) = \int 2 S_t f(\eta) L S_t f(\eta)\,\mu(d\eta), \]
we can express the variance as
\[ \Var_\mu(S_T f)-\Var_\mu(S_S f) = \int_T^S \int -2 S_t f(\eta) LS_t f(\eta)\,\mu(d\eta)\,dt. \]
By stationarity, $\int [L(S_t f)^2](\eta)\, \mu(d\eta) = 0$, hence
\begin{align*}
\Var_\mu(S_T f)-\Var_\mu(S_S f) &= \int_T^S \int [L(S_t f)^2](\eta) -2 S_t f(\eta) LS_t f(\eta)\,\mu(d\eta)\,dt \\
&= \int_T^S \int [L(S_t f - S_tf(\eta))^2](\eta)\,\mu(d\eta)\,dt.
\end{align*}
\end{proof}
Note that in the setting of Glauber dynamics $\normb{f}_1<\infty$ implies both $\normb{S_t f}, \normb{(S_t f)^2}<\infty$, which in turn implies $S_tf, (S_tf)^2 \in\dom(L)$ \cite[section I.3]{LIGGETT:05}.

The idea of the proof of Theorem \ref{thm:main-result} is to rewrite \eqref{eq:variance} using the graphical representation to describe the semi-group $S_t$. Then various applications of H\"older's inequality are used to separate different parts contributing to the variance formulation \eqref{eq:variance}. However the calculation is fairly sensitive to the order in which different aspects are treated, and has one crucial non-trivial use of the graphical construction on the infinite volume.

We start by looking how the graphical construction can be used in light of Lemma \ref{lemma:variance}.
Let, by slight abuse of notation, $\oN \subset \bZ^d\times\,[0,\infty[$ be a fixed realization of the Poisson point process on $\bZ^d\times\,[0,\infty[$, the set of resampling events. Almost surely this is a locally finite subset of $\bZ^d\times\,]0,\infty[$. We denote all resampling events up to time $t$ by $\oN_{t} := \{(y,s)\in \oN\,:\,s\leq t\}$.

To determine what influence a flip at site $x$ has on the configuration at time $t$ we use the graphical construction, particularly the partial order introduced in definition \ref{definition:resampling-order}. Given the fixed realization $\oN$, the cone
\begin{align*}
C_{t,x} &:= \{(y,s) \in \oN_t : (y,s)>_\oN (x,0) \}
\end{align*}
contains all resampling events which depend on the value of the initial configuration at site $x$, see also figure \ref{fig:cone}. 
\begin{figure}[hbtp]
\begin{center}
\setlength{\unitlength}{1cm}
\begin{picture}(7,5.5)(0,0)
\put(0,0){\line(1,0){7}}
\put(0,5){\line(1,0){7}}

\put(1,0){\line(0,1){5}}
\put(2,0){\line(0,1){5}}
\put(3,0){\line(0,1){5}}
\put(4,0){\line(0,1){5}}
\put(5,0){\line(0,1){5}}
\put(6,0){\line(0,1){5}}

\put(1,4){\makebox(0,0){$\times$}}
\put(1,1){\makebox(0,0){$\times$}}
\put(2,0.3){\makebox(0,0){$\times$}}
\put(2,2){\makebox(0,0){$\times$}}
\put(6,2.1){\makebox(0,0){$\times$}}

\put(2,4.2){\circle*{.1}}
\qbezier[15](2,4.2)(2.5,4.2)(3,4.2)
\qbezier[15](1.95,4.2)(1.95,4.6)(1.95,5)
\put(3,2.6){\circle*{.1}}
\qbezier[15](3,2.6)(3.5,2.6)(4,2.6)
\qbezier[15](2.95,2.6)(2.95,3.4)(2.95,4.2)
\qbezier[15](3.95,0)(3.95,1.3)(3.95,2.6)
\qbezier[15](4.05,0)(4.05,1)(4.05,2)
\qbezier[15](4.05,2)(4.05,2.85)(4.05,3.7)
\qbezier[15](5.05,3.7)(5.05,4.25)(5.05,4.8)
\qbezier[15](6.05,4.8)(6.05,4.9)(6.05,5)
\qbezier[15](4,3.7)(4.5,3.7)(5,3.7)
\qbezier[15](5,4.8)(5.5,4.8)(6,4.8)

\put(3,4.5){\circle*{.1}}
\put(4,1.2){\circle*{.1}}
\put(5,3.7){\circle*{.1}}
\put(6,4.8){\circle*{.1}}

\put(-0.3,0){\makebox(0,0){$0$}}
\put(-0.3,5){\makebox(0,0){$t$}}
\put(4,-0.3){\makebox(0,0){$x$}}

\end{picture}
\end{center}
\caption{\small The cone $C_{t,x}$ containing all resampling events depending on $(0,x)$.}\label{fig:cone}
\end{figure}
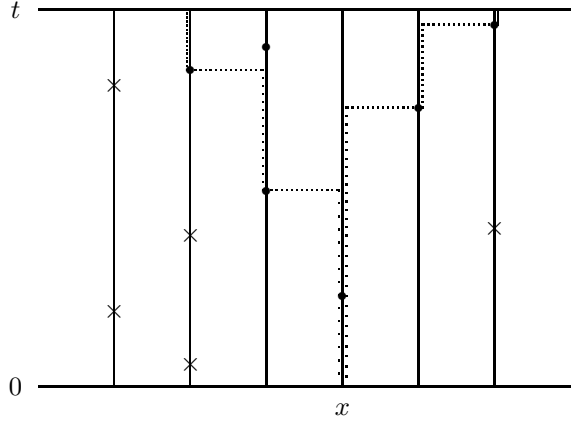
Motivated by \eqref{eq:variance2} we also introduce the same cone with another resampling event added at site $x$ and time 0:
\[ \widetilde{C}_{t,x} := C_{t,x}\cup \{x,0\}. \]
Given a realization of the independent uniform $[0,1]$ variables associated to the resampling events, $\oU:\oN\to[0,1]$, we extend the above sets to
\begin{align*}
\oN_{t}^\oU &:= \{(y,s,\oU((y,s))): (y,s)\in \oN_t\}; \\ 
C_{t,x}^\oU &:= \{(y,s,\oU((y,s))): (y,s)\in C_{t,x}\}. 
\end{align*}
In the case of the added resampling event at $(x,0)$ we assume a given $u\in[0,1]$ to extend the event to $(x,0,u)$. This leads to
\begin{align*}
\widetilde{\oN}_t^\oU &:= \oN_t^\oU \cup \{(x,0,u)\}, \\
\widetilde{C}_{t,x}^\oU &:= C_{t,x}^\oU \cup \{(x,0,u)\},
\end{align*}
and, from $\eta\in\Omega$, 
\begin{align*}
\widetilde{\eta} &:= \Psi(\eta,(x,0,u)). 
\end{align*}
Now we are ready to formulate the crucial idea. We want to compare the evolution of two configurations $\eta^1_t,\eta^2_t$ under the graphical construction coupling when
started from two initial configurations $\eta,\widetilde\eta$. By the graphical construction,
\begin{align*}
\eta^1_t &= \Psi(\eta, \oN^\oU_t), \\
\eta^2_t &= \Psi(\widetilde\eta, \oN^\oU_t) = \Psi(\eta, \widetilde{\oN}^\oU_t). 
\end{align*}
By the reordering principle of the graphical construction in Lemma \ref{lemma:reordering},
\begin{align}
\eta^1_t &= \Psi(\xi, C^\oU_{t,x}), \label{eq:eta1-xi}\\
\xi&= \Psi(\eta, \oN^\oU_t \bs C^\oU_{t,x}). \nonumber
\end{align}
Similarly, 
\begin{align}\label{eq:eta2-xi}
 \eta^2_t = \Psi(\xi, \widetilde{C}_{t,x}^\oU). 	
\end{align}

So we can see $\xi$ as a common ancestor of $\eta^1_t$ and $\eta^2_t$ in terms of the graphical construction (it is not an ancestor in time). This is very important, as both configurations only differ from $\xi$ by a finite number of resampling events, namely those in $C_{t,x}^\oU$ or $\widetilde{C}_{t,x}^\oU$ respectively. The proof of Theorem \ref{thm:main-result} is based on this observation, with Lemma \ref{lemma:variance} as a starting point.

To further facilitate the comparison of $\eta^1_t,\eta^2_t$ with $\xi$, write $C_{t,x}$ as the enumeration $\{ (x_k,t_k,U_k), 1\leq k \leq \abs{C_{t,x}} \}$ with $t_k\geq t_{k-1}$ and $(x_0,t_0,U_0) = (x,0,u)$. With this, 
\begin{align}
\xi_k &:= \Psi(\xi_{k-1},(x_k,t_k,U_k)), \quad 1\leq k\leq \abs{C_{t,x}},	\label{eq:xi_k}\\
\xi_0 &:= \xi, \nonumber\\
\widetilde{\xi}_k &:= \Psi(\widetilde{\xi}_{k-1},(x_k,t_k,U_k)), \quad 1\leq k\leq \abs{C_{t,x}},	\label{eq:tildexi_k}\\
\widetilde{\xi}_0 &:= \Psi(\xi, (x,0,u)).	\nonumber
\end{align}
By Proposition \ref{prop:graphical-construction} $\xi_k$, $\widetilde{\xi}_k$ are $\mu$-distributed provided $\xi$ is since they are obtained via resampling steps. So we can describe $\eta^1_t$ and $\eta^2_t$ via finitely many flips from a common ancestor $\xi$, and each step in between is $\mu$-distributed.

With the observations above we can rewrite part of \eqref{eq:variance2} using the graphical representation.
\begin{lemma}\label{lemma:blowup}
Using above notation,
\begin{align*}
& \left(S_tf(\widetilde{\eta}) - S_t f(\eta) \right)^2 \\
&\quad \leq \hP_{\widetilde{\eta},\eta}\left(\eta^1_t\neq \eta^2_t\right)\int d\oN \left(2\abs{C_{t,x}}+1\right) \int d\oU \\
&\qquad \left[ \suml_{k=1}^{\abs{C_{t,x}}} \left(\nabla_{x_k}f(\widetilde{\xi}_{k-1})\right)^2 + \suml_{k=1}^{\abs{C_{t,x}}} \left(\nabla_{x_k}f({\xi}_{k-1})\right)^2 + \left(\nabla_{x}f(\xi)\right)^2 \right] .
\end{align*}
\end{lemma}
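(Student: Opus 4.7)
The plan is to start from the graphical representation formulas \eqref{eq:eta1-xi}--\eqref{eq:eta2-xi}, use Cauchy--Schwarz to extract the coupling probability $\hP_{\widetilde\eta,\eta}(\eta^1_t\neq \eta^2_t)$, and then telescope $f(\eta^2_t)-f(\eta^1_t)$ along the two resampling chains $(\xi_k),(\widetilde\xi_k)$ sharing the common ancestor $\xi$.

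First, by Proposition~\ref{prop:graphical-construction}(c),
\[ S_tf(\widetilde\eta)-S_tf(\eta)=\int\int\bigl[f(\eta^2_t)-f(\eta^1_t)\bigr]\,d\oU\,d\oN. \]
Since the integrand vanishes on $\{\eta^1_t=\eta^2_t\}$, we may insert the indicator $\mathbf 1_{\{\eta^1_t\neq \eta^2_t\}}$ for free. Applying Cauchy--Schwarz in $L^2(d\oU\,d\oN)$ with one factor equal to this indicator gives
\[ \bigl(S_tf(\widetilde\eta)-S_tf(\eta)\bigr)^2\leq \hP_{\widetilde\eta,\eta}(\eta^1_t\neq \eta^2_t)\cdot\int\int\bigl[f(\eta^2_t)-f(\eta^1_t)\bigr]^2\,d\oU\,d\oN. \]
This already isolates the desired probability factor; the remaining work is to bound the squared increment inside the integral.

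Next, fix $\oN$ and $\oU$ and use the common ancestor construction. By \eqref{eq:eta1-xi}--\eqref{eq:eta2-xi} together with the enumeration \eqref{eq:xi_k}--\eqref{eq:tildexi_k}, we have $\eta^1_t=\xi_K$ and $\eta^2_t=\widetilde\xi_K$ with $K:=\abs{C_{t,x}}$, $\xi_0=\xi$, and $\widetilde\xi_0=\Psi(\xi,(x,0,u))$. Writing the telescoping identity
\[ f(\widetilde\xi_K)-f(\xi_K)=\sum_{k=1}^K\bigl[f(\widetilde\xi_k)-f(\widetilde\xi_{k-1})\bigr]+\bigl[f(\widetilde\xi_0)-f(\xi_0)\bigr]-\sum_{k=1}^K\bigl[f(\xi_k)-f(\xi_{k-1})\bigr], \]
we recognise a sum of $2K+1$ terms. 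The Cauchy--Schwarz inequality $(a_1+\dots+a_n)^2\leq n\sum a_i^2$ then yields
\[ \bigl[f(\eta^2_t)-f(\eta^1_t)\bigr]^2\leq (2K+1)\Bigl[\sum_{k=1}^K\bigl(f(\widetilde\xi_k)-f(\widetilde\xi_{k-1})\bigr)^2+\bigl(f(\widetilde\xi_0)-f(\xi_0)\bigr)^2+\sum_{k=1}^K\bigl(f(\xi_k)-f(\xi_{k-1})\bigr)^2\Bigr]. \]

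Finally, each resampling step $\Psi(\cdot,(x_k,t_k,U_k))$ either leaves the configuration unchanged or flips it precisely at site $x_k$; this is the whole point of the graphical construction. Consequently
\[ \bigl(f(\xi_k)-f(\xi_{k-1})\bigr)^2\leq\bigl(\nabla_{x_k}f(\xi_{k-1})\bigr)^2,\qquad \bigl(f(\widetilde\xi_k)-f(\widetilde\xi_{k-1})\bigr)^2\leq\bigl(\nabla_{x_k}f(\widetilde\xi_{k-1})\bigr)^2, \]
and analogously $(f(\widetilde\xi_0)-f(\xi_0))^2\leq(\nabla_xf(\xi))^2$, since $\widetilde\xi_0\in\{\xi,\xi^x\}$. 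Plugging these pointwise bounds back in and integrating over $d\oU\,d\oN$ (noting that $K=\abs{C_{t,x}}$ depends on $\oN$ but not on $\oU$, so it factors out of the $\oU$-integral) yields exactly the claimed inequality. The main conceptual step is the telescoping along the common ancestor, which is the reason the reordering lemma was established; the rest is routine Cauchy--Schwarz bookkeeping.
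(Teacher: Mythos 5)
Your proof is correct and follows essentially the same route as the paper: Cauchy--Schwarz (with the inserted indicator) to extract $\hP_{\widetilde\eta,\eta}(\eta^1_t\neq\eta^2_t)$, then the common-ancestor telescoping along $(\xi_k)$ and $(\widetilde\xi_k)$, the bound $(\sum_i a_i)^2\leq n\sum_i a_i^2$ with $n=2\abs{C_{t,x}}+1$, and the pointwise estimate of each increment by the corresponding $\nabla_{x_k}f$ term. The only cosmetic difference is that you apply Cauchy--Schwarz directly under $d\oU\,d\oN$ rather than first in the coupling expectation $\hE_{\widetilde\eta,\eta}$, which is the same step since the coupling is defined through the graphical construction.
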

\begin{proof}
Start with
\begin{align*}
\left(S_t(\tilde\eta)-S_tf(\eta)\right)^2 
&= \left(\hE_{\tilde\eta,\eta}(f(\eta^1_t)-f(\eta^2_t))\ind_{\eta^1_t\neq\eta^2_t}\right)^2 \\
&\leq \hE_{\tilde\eta,\eta}\left(f(\eta^1_t)-f(\eta^2_t)\right)^2\,\hP_{\tilde\eta,\eta}(\eta^1_t \neq \eta^2_t ).
\end{align*}
Now let $\hP$ be the graphical construction coupling, then, in the notation of Section \ref{section:graphical-construction},
\[ \hE_{\widetilde\eta,\eta}(f(\eta^1_t)-f(\eta^2_t))^2 = \int d\oN \int d\oU \left[f\left(\Psi(\tilde\eta,\oN_t^\oU)\right) - f\left(\Psi(\eta,\oN_t^\oU)\right)\right]^2. \]
Using \eqref{eq:eta1-xi} and \eqref{eq:eta2-xi},
\begin{align}
\left[f\left(\Psi(\tilde\eta,\oN_t^\oU)\right) - f\left(\Psi(\eta,\oN_t^\oU)\right)\right]^2
= \left[f\left(\Psi(\xi,\widetilde{C}_{t,x}^\oU)\right) - f\left(\Psi(\xi, C_{t,x}^\oU)\right)\right]^2. \label{eq:difference-from-xi}
\end{align}
This can be rewritten using the telescopic sum over the individual resampling steps \eqref{eq:xi_k},\eqref{eq:tildexi_k}: 
\begin{align*}
f\left(\Psi\left(\xi,C_{t,x}^\oU\right)\right) - f(\xi_0) &= \suml_{k=1}^{\abs{C_{t,x}}} f(\xi_k)-f(\xi_{k-1}),	\\
f\left(\Psi\left(\xi,\widetilde{C}_{t,x}^\oU\right)\right) - f(\tilde\xi_0) &= \suml_{k=1}^{\abs{C_{t,x}}} f(\tilde{\xi}_k)-f(\tilde{\xi}_{k-1}) .
\end{align*}
Putting the telescopic sums into \eqref{eq:difference-from-xi} and using the inequality $(\sum_{i=1}^n a_i)^2\leq n\sum_{i=1}^n a_i^2$ leads to the upper bound
\begin{align*}
\left( 2\abs{C_{t,x}} + 1 \right)& \left[ \suml_{k=1}^{\abs{C_{t,x}}} \left(f(\tilde{\xi}_k)-f(\tilde{\xi}_{k-1})\right)^2 \right. \\
&+ \left. \suml_{k=1}^{\abs{C_{t,x}}} \left(f(\xi_k)-f(\xi_{k-1})\right)^2 + (f(\tilde\xi_0)-f(\xi_0))^2 \right].
\end{align*}
Notice that by construction, $\xi_k$ and $\xi_{k-1}$ are identical except for a possible flip at site $x_k$. Consequently, we can further estimate by
\[ \left( 2\abs{C_{t,x}} + 1 \right)\left[ \suml_{k=1}^{\abs{C_{t,x}}} (\nabla_{x_k}f({\tilde\xi}_{k-1}))^2 + \suml_{k=1}^{\abs{C_{t,x}}} (\nabla_{x_k}f(\xi_{k-1}))^2 + (\nabla_x f(\xi))^2 \right]. \]
\end{proof}
The next lemma deals with rearranging and separating integrals as well as condensing the individual terms as much as possible, continuing where Lemma \ref{lemma:blowup} left off.
\begin{lemma}\label{lemma:hardwork}
For $1\leq p \leq q \leq \infty$ with $\frac1p +\frac1q =1$,
\begin{align*}
&\sum_{x\in\bZ^d} \int \mu(d\eta) \int_0^1 du \left( S_t f(\Psi(\eta, (x,0,u))) - S_t f(\eta) \right)^2 \\
&\quad \leq \left(\int c(0,\eta) \theta_t(\eta)^q\,\mu(d\eta)\right)^{\frac1q}\left( \int \left(2\abs{C_{t,0}}+1\right)^2 d\oN \right)\left( \sum_{x\in\bZ^d} \norm{(\nabla_x f)^2}_{L^p(\mu)}\right),
\end{align*}
where 
\begin{align*}
\theta_t(\eta) &= \hP_{\eta^0,\eta}(\eta_t^1\neq \eta_t^2).
\end{align*}
\end{lemma}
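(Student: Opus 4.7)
The plan is to combine Lemma \ref{lemma:blowup} with a Hölder inequality in $(\eta,u)$ to split off the coupling factor, then to use Minkowski and the invariance from Proposition \ref{prop:graphical-construction}(b) to collapse the remaining $L^p$-norm to pure gradient norms, and finally to sum over $x$ by exploiting translation invariance of the Poisson process.

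\textbf{Isolating the coupling factor.} Fix $x$. Integrating the pointwise estimate of Lemma \ref{lemma:blowup} against $\mu(d\eta)\,du$, write $\tilde G_x(\eta,u):=\int d\oN\,(2\abs{C_{t,x}}+1)\int d\oU\,[\ldots]$ and apply Hölder with conjugate exponents $p,q$ to the $(\eta,u)$-integration:
\[
\int\mu(d\eta)\int_0^1\!du\,(S_tf(\widetilde\eta)-S_tf(\eta))^2 \le \Bigl(\int\hP_{\widetilde\eta,\eta}(\eta^1_t\neq\eta^2_t)^q\,d\mu\,du\Bigr)^{1/q}\norm{\tilde G_x}_{L^p(d\mu\,du)}.
\]
The coupling probability vanishes unless $\widetilde\eta=\eta^x$, which has $u$-measure $c(x,\eta)$ and on which it equals $\theta_{t,x}(\eta)$. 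Translation invariance of $\mu$, $c$, and the graphical construction reduces the first factor to the $x$-independent quantity $(\int c(0,\eta)\,\theta_t(\eta)^q\,d\mu)^{1/q}$.

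\textbf{Collapsing the $L^p$-norm.} To bound $\norm{\tilde G_x}_{L^p(d\mu\,du)}$, I first apply Minkowski's integral inequality to pull $\int d\oN$ outside the norm, then push $\int d\oU$ inside the $p$-th power by Jensen (since $d\oU$ is a probability measure), and finally split the telescopic sum using the triangle inequality in $L^p(d\mu\,du\,d\oU)$. Each individual term takes the form $\norm{(\nabla_{y_i}f(\xi_i))^2}_{L^p(d\mu\,du\,d\oU)}$ where $\xi_i\in\{\xi,\xi_{k-1},\widetilde\xi_{k-1}\}$ arises from $\eta$ through finitely many resampling steps; by Proposition \ref{prop:graphical-construction}(b) each such $\xi_i$ is $\mu$-distributed under the joint law, so the norm collapses to $\norm{(\nabla_{y_i}f)^2}_{L^p(\mu)}$. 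Accounting for the $2\abs{C_{t,x}}+1$ terms yields
\[
\norm{\tilde G_x}_{L^p(d\mu\,du)}\le \int d\oN\,(2\abs{C_{t,x}}+1)\Bigl(\norm{(\nabla_x f)^2}_{L^p(\mu)}+2\sum_{(y,s)\in C_{t,x}}\norm{(\nabla_y f)^2}_{L^p(\mu)}\Bigr).
\]

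\textbf{Summing over $x$ and the main obstacle.} The diagonal contribution $\sum_x\int d\oN\,(2\abs{C_{t,x}}+1)\norm{(\nabla_xf)^2}_{L^p(\mu)}$ equals $\bigl(\int(2\abs{C_{t,0}}+1)\,d\oN\bigr)\sum_x\norm{(\nabla_xf)^2}_{L^p(\mu)}$ by translation invariance of $\oN$ and is dominated by the desired bound since $2\abs{C}+1\le(2\abs{C}+1)^2$. For the off-diagonal part, I interchange the summations as $\sum_x\sum_{(y,s)\in C_{t,x}}=\sum_y\sum_{(y,s)\in\oN_t}\sum_{x\in B_{y,s}}$ with $B_{y,s}:=\{x:(y,s)\in C_{t,x}\}$; translation invariance $\oN\mapsto\tau_{-y}\oN$ then makes the inner $\oN$-integral independent of $y$, isolating $\sum_y\norm{(\nabla_yf)^2}_{L^p(\mu)}$ outside. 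The essential remaining --- and delicate --- step is the counting estimate
\[
\int d\oN\sum_{(0,s)\in\oN_t}\sum_{x\in B_{0,s}}(2\abs{C_{t,x}}+1)\le \int(2\abs{C_{t,0}}+1)^2\,d\oN,
\]
which is the main obstacle. The idea is that every $x\in B_{0,s}$ is joined to $(0,s)$ by a nearest-neighbour interaction path lying inside $C_{t,x}$, so both the size of $B_{0,s}$ and every participating $\abs{C_{t,x}}$ are controlled by a single combined space--time cone associated with $(0,s)$, and the product aggregates to the squared cone volume.
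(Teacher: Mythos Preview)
Your route through H\"older in $(d\mu,du)$, then Minkowski for $\int d\oN$, Jensen for $d\oU$, and the triangle inequality for the telescopic sum lands on exactly the intermediate bound the paper obtains (the paper instead pulls $\int d\oN$ and $\sum_k$ outside first and applies H\"older term-by-term in $d\mu\,du\,d\oU$; the two orderings give the same result). So up to the expression
$\int d\oN\,(2\abs{C_{t,x}}+1)\bigl(\norm{(\nabla_xf)^2}_{L^p(\mu)}+2\sum_{(y,s)\in C_{t,x}}\norm{(\nabla_yf)^2}_{L^p(\mu)}\bigr)$
your argument is correct and matches the paper's.

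The gap is in the summation over~$x$. By swapping $\sum_x$ with $\sum_{(y,s)\in C_{t,x}}$ and then translating by $-y$ you manufacture a counting problem that you flag as the ``main obstacle'' and do not actually prove; the heuristic about a ``single combined space--time cone'' does not constitute an argument, and the displayed inequality, even if granted, overshoots the constant in the lemma. The paper bypasses this entirely by translating in the \emph{other} variable: apply translation invariance of the Poisson law as $\oN\mapsto\tau_{-x}\oN$, which sends $C_{t,x}$ to $C_{t,0}$ and turns the $x$-th summand into
\[
\int d\oN\,(2\abs{C_{t,0}}+1)\Bigl(2\sum_{k=1}^{\abs{C_{t,0}}}\norm{(\nabla_{x_k+x}f)^2}_{L^p(\mu)}+\norm{(\nabla_xf)^2}_{L^p(\mu)}\Bigr).
\]
Now the sum over $x$ is a trivial relabelling, $\sum_x\norm{(\nabla_{x_k+x}f)^2}_{L^p(\mu)}=\sum_z\norm{(\nabla_zf)^2}_{L^p(\mu)}$ for each $k$, so the $2\abs{C_{t,0}}+1$ terms combine to give $(2\abs{C_{t,0}}+1)^2$ exactly, with no counting argument needed.
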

\begin{proof}
We start by using Lemma \ref{lemma:blowup} to estimate the inner term of
\[ \sum_{x\in\bZ^d} \int \mu(d\eta) \int_0^1 du \left( S_t f(\Psi(\eta, (x,0,u))) - S_t f(\eta) \right)^2. \]
Upon reordering some of the integrals and sums, we obtain
\begin{align}
&\sum_{x\in\bZ^d} \int d\oN \left(2\abs{C_{t,x}}+1\right)  \nonumber\\
&\qquad \left[ \suml_{k=1}^{\abs{C_{t,x}}} \int \mu(d\eta) \int_0^1 du \int d\oU  \left(\nabla_{x_k}f(\widetilde{\xi}_{k-1})\right)^2 \hP_{\widetilde{\eta},\eta}\left(\eta^1_t\neq \eta^2_t\right) \right.	\label{eq:hardwork1a}\\
&\qquad+ \suml_{k=1}^{\abs{C_{t,x}}} \int \mu(d\eta) \int_0^1 du \int d\oU \left(\nabla_{x_k}f({\xi}_{k-1})\right)^2 \hP_{\widetilde{\eta},\eta}\left(\eta^1_t\neq \eta^2_t\right) \label{eq:hardwork1b}\\
&\qquad\left.+ \int \mu(d\eta) \int_0^1 du \int d\oU \left(\nabla_{x}f(\xi)\right)^2 \hP_{\widetilde{\eta},\eta}\left(\eta^1_t\neq \eta^2_t\right) \label{eq:hardwork1c}\right].
\end{align}
Now we use H\"older's inequality with respect to the integration $ \int \mu(d\eta) \int_0^1 du \int d\oU$. In all three cases this produces as second term
\begin{align*}
&\left(\int \mu(d\eta) \int_0^1 du \int d\oU \ \hP_{\widetilde{\eta},\eta}\left(\eta^1_t\neq \eta^2_t\right)^q \right)^{\frac1q}. 	
\end{align*}
Note that, depending on $u$, $\tilde\eta$ is either $\eta^x$ or $\eta$, in which case $\hP_{\tilde\eta,\eta}(\eta^1_t\neq \eta^2_t)=0$. Using this as well as translation invariance shows that the above term equals
\[
\left(\int c(0,\eta) \theta_t(\eta)^q\,\mu(d\eta)\right)^{\frac1q}.
\]
The other term of H\"older's inequality varies slightly from line to line, but as it is mostly the same we focus on line \eqref{eq:hardwork1a}:
\begin{align*}
\left( \int \mu(d\eta) \int_0^1 du \int d\oU  \left(\nabla_{x_k}f(\widetilde{\xi}_{k-1})\right)^{2p} \right)^{\frac1p}
\end{align*}
Here we can finally use the fact that the configurations $\xi_k$,$\tilde{\xi}_k$ are $\mu$-distributed. Because of this fact we have the following identity:
\begin{align*}
&\left( \int \mu(d\eta) \int_0^1 du \int d\oU  \left(\nabla_{x_k}f(\widetilde{\xi}_{k-1})\right)^{2p} \right)^{\frac1p} \\
&\quad= \left( \int \mu(d\eta) \left(\nabla_{x_k}f(\eta)\right)^{2p} \right)^{\frac1p}
= \norm{(\nabla_{x_k}f)^2}_{L^p(\mu)}.
\end{align*}
Applying the same argument to \eqref{eq:hardwork1b} and \eqref{eq:hardwork1c},
\begin{align*}
&\sum_{x\in\bZ^d} \int \mu(d\eta) \int_0^1 du \left( S_t f(\Psi(\eta, (x,0,u))) - S_t f(\eta) \right)^2 \\
&\quad \leq \left(\int c(0,\eta) \theta_t(\eta)^q\,\mu(d\eta)\right)^{\frac1q} \\
&\qquad\qquad \sum_{x\in\bZ^d} \int d\oN \left(2\abs{C_{t,x}}+1\right) \left[ 2\suml_{k=1}^{\abs{C_{t,x}}}\norm{(\nabla_{x_k} f)^2}_{L^p(\mu)} + \norm{(\nabla_{x} f)^2}_{L^p(\mu)} \right].
\end{align*}
By translation invariance of the law of $\oN$, 
\begin{align*}
&\sum_{x\in\bZ^d} \int d\oN \left(2\abs{C_{t,x}}+1\right) \left[ 2\suml_{k=1}^{\abs{C_{t,x}}}\norm{(\nabla_{x_k} f)^2}_{L^p(\mu)} + \norm{(\nabla_{x} f)^2}_{L^p(\mu)} \right]	\\
&\quad= \sum_{x\in\bZ^d} \int d\oN \left(2\abs{C_{t,0}}+1\right) \left[ 2\suml_{k=1}^{\abs{C_{t,0}}}\norm{(\nabla_{x_k+x} f)^2}_{L^p(\mu)} + \norm{(\nabla_{x} f)^2}_{L^p(\mu)} \right]	\\
&\quad =  \int d\oN \left(2\abs{C_{t,0}}+1\right)^2 \sum_{x\in\bZ^d} \norm{(\nabla_{x} f)^2}_{L^p(\mu)} .
\end{align*}
\end{proof}
In order to proceed we need estimates on the size of $C_{t,0}$. The following two lemmas provides us with those.
\begin{lemma}\label{lemma:firstpassagepercolation}
Denote by $B_t\subset \bZ^d$ the set of sites which are represented in $C_{t,0}$, i.e.,
\[ B_t := \{ x\in\bZ^d \,|\, \exists\, s \in [0,t]:(x,s) \in C_{t,0} \} \cup \{0\}. \]
Then there exist dimension-dependent constants $c_1,c_2>0$ such that
\begin{enumerate}
\item $ \int \abs{B_t}^2\,d\oN \leq c_1 (t+1)^{2d} $; 
\item $ \sum_{x\in\bZ^d} \left(\int \ind_{x\in B_t}\,d\oN\right)^{\frac12} \leq c_2 (t+1)^d$. 
\end{enumerate}
\end{lemma}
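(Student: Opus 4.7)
My plan is to identify $B_t$ as the range of a first-passage-percolation-type infection process on $\bZ^d$ driven by the Poisson clocks $\oN$, derive a quantitative tail bound on $\prob{x\in B_t}$, and obtain (a) and (b) by summation. Formally, for $x\neq 0$, $x\in B_t$ iff there exist a nearest-neighbour lattice path $0=y_0,y_1,\ldots,y_K=x$ of some length $K\geq \|x\|_1$ and times $0<t_1<\cdots<t_K\leq t$ with $(y_i,t_i)\in\oN$ for each $i$.

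The heart of the argument is a single-path estimate. For a fixed lattice path of length $K$, the memoryless property of the Poisson clocks at each individual site means that the ``greedy'' times $\tau_i$, defined recursively as the first $\oN$-event at $y_i$ strictly after $\tau_{i-1}$ (with $\tau_0=0$), form a sum of $K$ i.i.d.\ $\mathrm{Exp}(1)$ increments; since any valid chain along the path with $t_K\leq t$ must satisfy $t_i\geq\tau_i$, the probability of such a chain is at most $\prob{\Gamma(K,1)\leq t}$. A union bound over the at most $(2d)^K$ lattice paths of length $K$ starting at $0$, together with the Chernoff bound $\prob{\Gamma(K,1)\leq t}\leq e^{st}(1+s)^{-K}$ (from the Laplace transform $\erw{e^{-s\Gamma(K,1)}}=(1+s)^{-K}$) specialised to $s=4d-1$, will yield
\[ \int \ind_{x\in B_t}\, d\oN \;\leq\; 2\, e^{(4d-1)t}\, 2^{-\|x\|_1}. \]

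From here, I prove (b) by splitting the sum at the threshold $\|x\|_1=Mt$ with $M=(4d-1)/\log 2$: the near part contributes $O(t^d)$ since each $\prob{x\in B_t}^{1/2}\leq 1$ and the $\ell^1$-ball of radius $Mt$ has volume $O(t^d)$; on the far part the factor $e^{(4d-1)t/2}\,2^{-\|x\|_1/2}$ equals $1$ at the threshold and decays geometrically beyond it, so the tail sum $\sum_{n>Mt}n^{d-1}2^{-(n-Mt)/2}$ is of order $(Mt)^{d-1}$. For (a) I use the elementary $\prob{A\cap B}\leq \sqrt{\prob{A}\prob{B}}$ to factorise
\[ \int |B_t|^2\, d\oN = \sum_{x,y}\prob{x,y\in B_t} \;\leq\; \left(\sum_x \prob{x\in B_t}^{1/2}\right)^2, \]
reducing (a) to the square of the estimate from (b).

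The main subtlety is in the single-path estimate: a lattice path of length $K$ need not be self-avoiding, but the memoryless property of the Poisson processes is a per-site statement, so the $\mathrm{Exp}(1)$ structure of successive greedy increments survives revisits of a site. The specific choice $s=4d-1$ is cosmetic; any $s>2d-1$ gives exponential decay in $\|x\|_1$ compensated by exponential growth in $t$, which is exactly what the splitting at $\|x\|_1\sim t$ exploits.
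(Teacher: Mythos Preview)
Your argument is correct and self-contained, but it differs from the paper's in two places worth noting.

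For the tail bound on $\prob{x\in B_t}$, the paper does not carry out the path-counting and Chernoff estimate you give; instead it observes that $B_t$ is stochastically dominated by the ball $\widetilde{B}_t$ of radius $t$ in first-passage percolation with i.i.d.\ $\mathrm{Exp}(1)$ edge weights (each boundary site of $B_t$ is added at rate $1$, whereas for $\widetilde{B}_t$ the rate is at least the number of adjacent occupied edges), and then quotes Kesten's large-deviation bounds to get $\prob{x\in\widetilde{B}_t}\leq k_2 e^{-k_3|x|}$ for $|x|>k_1 t$. Your direct approach avoids the citation and is more elementary; the only thing you lose is that the paper's route also yields the domination statement, which is not needed here anyway.

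For part (a), the paper argues differently: it shows that $|\widetilde{B}_t|>(2r+1)^d$ forces some site with $\|x\|_\infty>r$ to lie in $\widetilde{B}_t$, and combines this with the pointwise tail bound to get exponential tails for the volume $|\widetilde{B}_t|$, whence all moments grow polynomially. Your reduction of (a) to the square of (b) via $\prob{A\cap B}\leq\sqrt{\prob{A}\prob{B}}$ is shorter and gives exactly the required second-moment bound without passing through the full tail estimate for the volume; it does not, however, yield higher moments for free, which the paper's argument would (though they are not used later).
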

\begin{proof}
\newcommand{\tB}{\widetilde{B}}
The proof rests on the observation that $B_t$ is strongly related to first passage percolation: 
Consider first passage percolation with iid. exponentially distributed edge weights(see for example \cite{KESTEN:86}). Let $E$ be the edge set of $\bZ^d$, and $r_e, e\in E$, independent and $\exp(1)$-distributed. Then the first passage percolation distance is $T(0,x) = \inf \{ \sum_{e\in \gamma} r_e \,|\, \text{$\gamma$ path from 0 to $x$} \}$. Now we compare the ball $\tB_t:=\{x\in \bZ^d : T(0,x)\leq t\}$ of reachable sites within distance $t$ to $B_t$ in terms of growth. Denote the outer boundary of a finite subset $A$ of $\bZ^d$ by $\partial A = \{x\in\bZ^d\bs A \,|\, \exists\,y\in A : \abs{x-y}=1 \}$. The rate at which a site $x\in\partial\widetilde{B}_t$ is encompassed by $\widetilde{B}_t$ is equal to the number of edges connecting $x$ to $\widetilde{B}_t$. On the other hand $B_t$ grows to contain a site $x\in\partial B_t$ just at rate 1. Therefore $\tB_t$ stochastically dominates $B_t$, and proving a) and b) for $\tB_t$ suffices.

From the theory of first passage percolation(see \cite{KESTEN:86}, Theorems 3.10, 3.11) we use the following fact :
There exist positive constants $k_1, k_2, k_3$ (possibly dimension-dependent) such that for all $x\in \bZ^d$ with $\abs{x}>k_1 t$:
\begin{align}
\bP(x\in \tB_t) =  \bP(T(0,x) \leq t) \leq k_2 e^{-k_3 \abs{x}}. \label{eq:fpp1}	
\end{align}
 To prove b),
\begin{align*}
\sum_{x\in\bZ^d} \bP(x\in \tB_t)^{\frac12} &\leq \sum_{x: \abs{x}\leq k_1 t} 1 +\sum_{x: \abs{x}> k_1 t}k_2e^{-k_3\abs{x} } \\
&\leq (2k_1+1)^d t^d + \sum_{x \in \bZ^d}k_2e^{-k_3\abs{x} } \\
&\leq c_2 (t+1)^d
\end{align*}
for a suitable constant $c_2$. To prove a), fix an integer $r> k_1 t$ and note that $\abs{\tB_t}> (2r+1)^d$ implies that at least one site in $\tB_t$ lies outside a cube of size $2r+1$. Combining this fact with estimate \eqref{eq:fpp1} gives us
\[ \bP\left(\abs{\tB_t}> (2r+1)^d\right) \leq \sum_{\norm{x}_\infty=r+1}\bP(x\in\tB_t) \leq k_2 e^{-k_3 (r+1)} 2d(2r+3)^{d-1}, \]
which proves exponentially decaying tails for the volume of $\tB_t$.
\end{proof}
Utilizing Lemma \ref{lemma:firstpassagepercolation} we now prove the second moment estimate of $\abs{C_{t,0}}$ needed for Lemma \ref{lemma:hardwork}.
\begin{lemma}\label{lemma:clusterestimate}
There exists a dimension-dependent constant $C_d>$ so that the following estimate holds:
\[ \int (2\abs{C_{t,0}}+1)^2\,d\oN \leq C_d (t+1)^{2d+2}. \]
\end{lemma}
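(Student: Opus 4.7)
The plan is to bound $|C_{t,0}|$ by the total number of Poisson points sitting at sites of $B_t$ during $[0,t]$, and then use Cauchy--Schwarz iteratively together with Lemma~\ref{lemma:firstpassagepercolation} to decouple the random set $B_t$ from the Poisson counts.

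Write $N_t(y) := |\oN \cap (\{y\} \times [0,t])|$ for the Poisson count at site $y$; under $d\oN$ these are iid Poisson($t$) variables. Since every resampling event in $C_{t,0}$ lies in $B_t \times [0,t]$, one has $|C_{t,0}| \leq \sum_{y \in B_t} N_t(y)$, and Cauchy--Schwarz gives $|C_{t,0}|^2 \leq |B_t|\sum_y \ind_{y \in B_t} N_t(y)^2$. I would then integrate against $d\oN$ and apply Cauchy--Schwarz twice more --- first to split $|B_t|$ from the Poisson sum, then inside each summand to split $\ind_{y \in B_t}$ from $N_t(y)^4$ --- to obtain
\[
\int |C_{t,0}|^2 \, d\oN \leq \Bigl(\int |B_t|^2 \, d\oN\Bigr)^{\!1/2}\, \sum_{y \in \bZ^d} \Bigl(\int \ind_{y\in B_t}\,d\oN\Bigr)^{\!1/4} \Bigl(\int N_t(y)^8 \, d\oN\Bigr)^{\!1/4}.
\]

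Three inputs then finish the proof: (i) $\int |B_t|^2 \, d\oN \leq c_1 (t+1)^{2d}$ from Lemma~\ref{lemma:firstpassagepercolation}(a); (ii) $\int N_t(y)^8 \, d\oN \leq C(t+1)^8$ by the standard formula for Poisson moments; and (iii) $\sum_y (\int \ind_{y \in B_t}\,d\oN)^{1/4} \leq c_2'(t+1)^d$, which is exactly the computation of Lemma~\ref{lemma:firstpassagepercolation}(b) with exponent $\tfrac14$ in place of $\tfrac12$. Only the constants change, because the FPP tail \eqref{eq:fpp1} is still exponentially decaying for $|y| > k_1 t$, while the bulk ball $\{|y|\leq k_1 t\}$ still contributes $O(t^d)$. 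Multiplying everything together yields $\int |C_{t,0}|^2 \, d\oN \leq C_d' (t+1)^{2d+2}$, and the trivial bound $(2|C_{t,0}|+1)^2 \leq 8|C_{t,0}|^2 + 2$ gives the statement.

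The main obstacle is the strong dependence between $B_t$ and the Poisson counts $N_t(y)$: the event $\{y \in B_t\}$ is driven by the same arrival times as $N_t(y)$, so a single Cauchy--Schwarz loses too much. The remedy is to pay a sufficiently high moment (here the eighth) of the Poisson counts so that the complementary exponent $\tfrac14$ on the first-passage probabilities is still summable in $y$ and only contributes $O(t^d)$. Balanced against the $(t+1)^d$ coming from $\int |B_t|^2 \, d\oN$ and the $(t+1)^2$ from the Poisson eighth moment, this produces precisely the $(t+1)^{2d+2}$ appearing in the statement.
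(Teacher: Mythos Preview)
Your argument is correct and gives the right order $(t+1)^{2d+2}$, but it takes a different route from the paper. The paper avoids the repeated Cauchy--Schwarz entirely by a conditioning argument: having observed that $C_{t,0}\subset \oN\cap(B_t\times[0,t])$, it notes that, conditionally on $B_t$ (and on the first arrival times $t_x$), the remaining points form a Poisson process, so $|C_{t,0}|-|B_t|$ is stochastically dominated by a Poisson$(t|B_t|)$ variable. This gives directly
\[
\int (2|C_{t,0}|+1)^2\,d\oN \;\leq\; 4\int (t+1)^2(|B_t|+1)^2\,d\oN,
\]
and then only part (a) of Lemma~\ref{lemma:firstpassagepercolation} is needed.

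The trade-off is this: the paper exploits the memoryless/Markov structure of the Poisson clocks to decouple $B_t$ from the counts in one step, so it never needs part (b) of Lemma~\ref{lemma:firstpassagepercolation}, never needs the eighth Poisson moment, and never needs the $\tfrac14$-exponent variant you introduce. Your approach, by contrast, treats the dependence between $B_t$ and the $N_t(y)$ as a black box and pays for it with higher moments; the advantage is that it uses nothing about the Poisson process beyond polynomial moment bounds, so it would transfer verbatim to other point processes with the same tail behaviour. Both arrive at the same polynomial order.
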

\begin{proof}
Let $B_t$ be as in Lemma \ref{lemma:firstpassagepercolation}. 
Then for each $x\in B_t$ we denote by $t_x$ the time of first time of appearance of $x$ in $C_{t,0}$,
\[ t_x:= \inf\{s\in[0,t] \,|\,(x,s)\in C_{t,0}\}. \]
We have
\[ C_{t,0} = \oN \cap \{ (x,s)\in\bZ^d\times [0,t] \,|\, x\in B_t, s\geq t_x \} \subset \oN \cap \{ (x,s)\in\bZ^d\times [0,t] \,|\, x\in B_t \}. \]
Conditioned on $B_t$ and $t_x$ the last set is Poisson distributed with the addition of the points $(x,t_x),x\in B_t$. Because of this, conditioned on $B_t$, $\abs{C_{t,0}}-\abs{B_t}$ is stochastically dominated by a Poisson distributed with parameter $t\abs{B_t}$. As a consequence, 
\[ \int (2\abs{C_{t,0}}+1)^2\,d\oN \leq 4 \int (t+1)^2(\abs{B_t}+1)^2\,d\oN. \]
Finally the estimate from Lemma \ref{lemma:firstpassagepercolation},a) completes the proof.
\end{proof}
With all ingredients present we can quickly prove the main result in form of a slightly more general lemma.
\begin{lemma}\label{lemma:main-precursor}
Let $f:\Omega\to\bR$ with $\normb{f}_2<\infty$ and $0\leq T\leq S$. Then
\begin{align*}
 &\Var_\mu(S_T f) - \Var_{\mu}(S_S f) \\
 &\quad\leq C_d \int_T^S (t+1)^{2d+2} \left(\int c(0,\eta) \theta_t(\eta)^q\,\mu(d\eta)\right)^{\frac1q} \,dt \sum_{x\in\bZ^d} \norm{(\nabla_x f)^2}_{L^p(\mu)}.  
 \end{align*}
$C_d$ is a constant depending just on the dimension.
\end{lemma}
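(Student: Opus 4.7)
The plan is to combine the three preparatory results that have been assembled—Lemma \ref{lemma:variance}, Lemma \ref{lemma:hardwork} and Lemma \ref{lemma:clusterestimate}—with one short identification step that links the Dirichlet-type integrand to the $\Psi$-formulation required by Lemma \ref{lemma:hardwork}.

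First I would invoke Lemma \ref{lemma:variance} to express
\[ \Var_\mu(S_Tf)-\Var_\mu(S_Sf) = \int_T^S \int \sum_{x\in\bZ^d} c(x,\eta)\bigl(S_tf(\eta^x)-S_tf(\eta)\bigr)^2 \mu(d\eta)\,dt. \]
(A mild approximation of $f$ by local functions will handle the regularity assumption $S_tf,(S_tf)^2\in\dom(L)$ starting from only $\normb{f}_2<\infty$; the dominated convergence is justified by the bound we are about to derive.) The next short observation is that, thanks to the definition of $\Psi$, for every site $x$ and configuration $\eta$
\[ \int_0^1 \bigl(S_tf(\Psi(\eta,(x,0,u)))-S_tf(\eta)\bigr)^2 du = c(x,\eta)\bigl(S_tf(\eta^x)-S_tf(\eta)\bigr)^2, \]
because $\Psi(\eta,(x,0,u))$ agrees with $\eta^x$ on a sub-interval of length $c(x,\eta)$ and with $\eta$ elsewhere. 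Substituting this identity rewrites the integrand of the $t$-integral exactly in the form handled by Lemma \ref{lemma:hardwork}.

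I would then apply Lemma \ref{lemma:hardwork} pointwise in $t$, yielding
\[ \int \sum_x c(x,\eta)\bigl(S_tf(\eta^x)-S_tf(\eta)\bigr)^2 \mu(d\eta) \leq \left(\int c(0,\eta)\theta_t(\eta)^q\mu(d\eta)\right)^{\!\frac1q}\!\left(\int (2\abs{C_{t,0}}+1)^2 d\oN\right)\!\sum_x\norm{(\nabla_xf)^2}_{L^p(\mu)}, \]
and immediately absorb the Poisson integral via Lemma \ref{lemma:clusterestimate} to obtain the factor $C_d(t+1)^{2d+2}$. Integrating the resulting inequality over $t\in[T,S]$ gives the statement of the lemma.

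All of the genuinely hard work—the blow-up via telescoping along the cone $C_{t,x}$, the use of Lemma \ref{lemma:reordering} to exhibit the common ancestor $\xi$, the H\"older separation using that the intermediate $\xi_k,\tilde\xi_k$ are $\mu$-distributed, and the first-passage-percolation control of $\abs{C_{t,0}}$—is already packaged in the earlier lemmas. The only real obstacle in this final step is a notational one: keeping straight that the Dirichlet-form integrand of Lemma \ref{lemma:variance} matches exactly what Lemma \ref{lemma:hardwork} estimates after the trivial integration over the uniform variable $u$, so that no extra factor of $c(x,\eta)$ is accidentally absorbed twice.
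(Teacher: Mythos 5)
Your proposal is correct and follows essentially the same route as the paper: Lemma \ref{lemma:variance}, the identification of the Dirichlet-form integrand with the graphical-construction form of the generator (your explicit identity $\int_0^1\bigl(S_tf(\Psi(\eta,(x,0,u)))-S_tf(\eta)\bigr)^2du=c(x,\eta)\bigl(S_tf(\eta^x)-S_tf(\eta)\bigr)^2$ is exactly the bridge the paper invokes via \eqref{eq:generator-graphical-representation}), then Lemma \ref{lemma:hardwork} pointwise in $t$, Lemma \ref{lemma:clusterestimate}, integration over $[T,S]$, and approximation by local functions to pass from $\normb{f}_1<\infty$ to $\normb{f}_2<\infty$.
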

\begin{proof}
Assume that $f$ satisfies $\normb{f}_1<\infty$. This then implies that $S_t f, (S_tf)^2 \in dom(L)$ and by Lemma \ref{lemma:variance}, 
\[ \Var_\mu(S_T f)-\Var_\mu(S_S f) = \int_T^S \int \left[L(S_tf-S_t f(\eta))^2\right](\eta)\,\mu(d\eta)\,dt .\]
By using the formulation of the generator using the graphical construction (see \eqref{eq:generator-graphical-representation}),
\[ Lf(\eta) = \sum_{x\in\bZ^d}\int_0^1 \left[f\left(\Psi(\eta,(x,0,u))\right) - f(\eta)\right]\,du, \]
we apply Lemma \ref{lemma:hardwork} and obtain
\begin{align*}
\Var_\mu(S_T f)- \Var_\mu(S_S f) \leq \int_T^S \int (2\abs{C_{t,0}}+1)^2\,d\oN \norm{\theta_t}_{L^q(\mu)} \,dt \sum_{x\in\bZ^d} \norm{(\nabla_x f)^2}_{L^p(\mu)}.
\end{align*}

Finally Lemma \ref{lemma:clusterestimate} gives us the estimate on $\int (2\abs{C_{t,0}}+1)^2\,d\oN$ to complete the proof. 

If $f$ only satisfies $\normb{f}_2<\infty$ we then approximate $f$ by local functions.
\end{proof}
\begin{proof}[Proof of Theorem \ref{thm:main-result}]
A direct consequence of Lemma \ref{lemma:main-precursor} with $S=\infty$ and the estimate 
$\left(\int c(0,\eta) \theta_t(\eta)^q\,\mu(d\eta)\right)^{\frac1q} \leq \norm{\theta_t}_{L^q(\mu)} $.
\end{proof}

We now prove Theorem \ref{thm:auto-correlation}, which is a modification of Theorem \ref{thm:main-result} for attractive spin-systems. 
\begin{proof}[Proof of Theorem \ref{thm:auto-correlation}]
This result is also based on Lemma \ref{lemma:main-precursor}. To estimate $\left(\int c(0,\eta) \theta_t(\eta)^q\,\mu(d\eta)\right)^{\frac1q}$ in terms of the auto-correlation, we start with the fact that in the coupling the spread of discrepancies is limited to $B_t$(as in Lemma \ref{lemma:firstpassagepercolation}):
\begin{align*}
\theta_t(\eta) &= \hP_{\eta^0,\eta}(\eta^1_t\neq\eta^2_t) \leq \hE_{\eta^0,\eta}\sum_{x\in B_t} \ind_{\eta^1_t(x)\neq\eta^2_t(x)} = \sum_{x\in\bZ^d} \hE_{\eta^0,\eta} \ind_{x\in B_t} \ind_{\eta^1_t(x)\neq\eta^2_t(x)} .
\end{align*}
Next, since $\theta_t\leq 1$,
\begin{align*}
\int c(0,\eta) \theta_t(\eta)^q \,\mu(d\eta) 
&\leq \int c(0,\eta) \theta_t(\eta) \,\mu(d\eta) \\
&\leq  \sum_{x\in\bZ^d} \int \hE_{\eta^0,\eta} \ind_{x\in B_t} \ind_{\eta^1_t(x)\neq\eta^2_t(x)}c(0,\eta) \,\mu(d\eta). 	
\end{align*}
We can now use Cauchy-Schwarz to obtain
\begin{align}\label{eq:ising1}
&\sum_{x\in\bZ^d} \left( \int \ind_{x\in B_t} \,d\oN \right)^{\frac12} \left( \int \hE_{\eta^0,\eta} \ind_{\eta^1_t(x)\neq\eta^2_t(x)} c(0,\eta)^2\,\mu(d\eta) \right)^{\frac12}.	
\end{align}
Since the model is attractive the coupling $\hP$ preserves an initial ordering. Since either $\eta^0 < \eta$ or $\eta < \eta^0$,
\begin{align*}
\hE_{\eta^0,\eta} \ind_{\eta^1_t(x)\neq\eta^2_t(x)} &= \frac12\hE_{\eta^0,\eta} \abs{\eta^1_t(x) - \eta^2_t(x)} = \frac12\abs{\hE_{\eta^0,\eta} \left(\eta^1_t(x)-\eta^2_t(x)\right)} \\
&= \frac12\abs{\bE_{\eta^0} \eta_t(x)- \bE_\eta \eta_t(x)}	.
\end{align*}
When we use the notation $g_x(\eta) := \eta(x)$ and $m=\mu(g_0)=\mu(g_x)$, 
\begin{align*}
&\int \hE_{\eta^0,\eta} \ind_{\eta^1_t(x)\neq\eta^2_t(x)} c(0,\eta)^2\,\mu(d\eta) 
= \frac12\int \abs{S_t g_x(\eta^0)- S_t g_x(\eta)} c(0,\eta)^2\,\mu(d\eta) 	\\
&\quad\leq \frac12\int \abs{ S_t g_x(\eta^0) -m} c(0,\eta)^2\,\mu(d\eta) + \frac12\int \abs{S_t g_x(\eta)-m } c(0,\eta)^2 \,\mu(d\eta) .
\end{align*}
We have
\begin{align*}
\int\abs{S_t g_x(\eta)-m } \,\mu(d\eta) &\leq \left(\int\left(S_t g_x(\eta)-m \right)^2\,\mu(d\eta)\right)^{\frac12} \\
& = \Var_\mu(S_t g_x)^{\frac12} = \Var_\mu(S_t g_0)^{\frac12}
\end{align*}
and by using reversibility and $c(0,\eta^0)\leq1$ we also have
\begin{align*}
\int \abs{ S_t g_x(\eta^0) -m }c(0,\eta) \,\mu(d\eta)
&= \int \abs{ S_t g_x(\eta^0) -m }c(0,\eta) \frac{\mu(d\eta)}{\mu(d\eta^0)} \,\mu(d\eta^0)	\\
&\leq \int \abs{ S_t g_x(\eta^0) -m } \,\mu(d\eta^0)	\leq \Var_\mu(S_t g_0)^{\frac12}.
\end{align*}
Combining the estimates we obtain
\[ \int c(0,\eta) \theta_t(\eta)^q \,\mu(d\eta) \leq  \sum_{x\in\bZ^d} \left( \int \ind_{x\in B_t} \,d\oN  \right)^{\frac12} \Var_\mu(S_tg_0) ^{\frac14} . \]
Furthermore Lemma \ref{lemma:firstpassagepercolation} gives us an estimate for the sum, so that
\[ \left( \int c(0,\eta) \theta_t(\eta)^q \,\mu(d\eta) \right)^{\frac1q} \leq c_2^{\frac1q}(t+1)^{\frac{d}{q}}\Var_\mu(S_tg_0) ^{\frac1{4q}}.  \]
Omitting the $q$-th root where convenient and with a constant $C_d' = C_d (1\vee c_2)$ as well as writing $\frac1q = \frac{p-1}{p}$ we obtain the result 
\[ D_p(T)\leq C_d'\int_T^\infty (t+1)^{3d+2} \left(\Var_\mu(S_t g_0)\right)^{\frac{p-1}{4p}}\,dt. \]
\end{proof}
\newcommand{\smu}[1][f]{E_{#1,#1}}
\begin{proof}[Proof of Proposition \ref{prop:spectral-gap}]
Let $f:\Omega\to\bR$ be a local function with $\mu(f)=0$. By Theorem \ref{thm:main-result}, for any $0<\lambda'<\lambda $,
\[ \norm{S_t f}^2_{L^2(\mu)} \leq const\cdot e^{-\lambda' t}, \]
and for any $0<a<\lambda'$
\begin{align}\label{eq:resolvent} 
\int_0^\infty e^{at}\norm{S_t f}^2_{L^2(\mu)}\,dt < \infty.
\end{align}
Let $\smu$ be the associated measure wrt. to the spectral decomposition of $-L$. Then
\[ \norm{S_t f}^2_{L^2(\mu)} = \int_0^\infty e^{-2\gamma t} \smu(d\gamma). \]
By \eqref{eq:resolvent}, 
\[ \int_0^\infty \int_0^\infty e^{at-2\gamma t} \smu(d\gamma) \,dt <\infty. \]
Therefore $\smu(]0,\lambda/2[)=0$ for any local function $f$.

Let now $f\in L^2(\mu)$ and approximate it by local functions $f_n$. Assuming that $(f_n), f$ have norm 1 makes $\smu[f_n],\smu[f]$ probability measures and $\smu[f_n]$ weakly converges to $\smu[f]$. By the Portmanteau theorem $\smu[f](]0,\lambda/2[)=0$, which completes the proof.
\end{proof}
\begin{proof}[Proof of Proposition \ref{prop:reverse-spectral-gap}]
By the Poincar\'e inequality, the auto-correlation of the spin at the origin, $\phi(t)=\Var_\mu(S_t g), g(\eta)=\eta(0),$ decays exponentially fast. The proof of Theorem \ref{thm:auto-correlation} contains the estimate of $\left(\int c(\eta,0)\theta_t(\eta)^q \,\mu(d\eta)\right)^{\frac1q}$ in terms of $\phi$.
\end{proof}
\begin{proof}[Proof of Proposition \ref{prop:L-infty-decay}]
We have 
\[ \norm{S_Tf-\mu(f)}_{\infty} = \norm{\int_T^\infty LS_t f\,dt} \leq \sup_{\eta\in\Omega}\int_T^\infty \sum_{x\in\Omega}\abs{\nabla_x S_t f(\eta)} dt. \]
Write $\delta_x(f) := \norm{\nabla_xf}_\infty$. Then
\begin{align*}
&\sum_{x\in\bZ^d} \abs{\nabla_xS_tf(\eta)}
\leq \sum_{x\in\bZ^d} \hE_{\eta^x,\eta}\abs{f(\eta^1_t)-f(\eta^2_t)}	\\
&\leq \sum_{x\in\bZ^d} \hE_{\eta^x,\eta}\left(\sum_{y\in\bZ^d}\ind_{\eta^1_t(y)\neq\eta^2_t(y)}\delta_y(f)\right)
= \hE_{\eta^0,\eta}\sum_{y\in\bZ^d}\ind_{\eta^1_t(y)\neq\eta^2_t(y)} \normb{f}_1 	\\
&\leq \hE_{\eta^0,\eta}\abs{C_{t,0}}\ind_{\eta^1_t\neq \eta^2_t} \normb{f}_1\\
\end{align*}
By the Cauchy-Schwarz inequality and Lemma \ref{lemma:clusterestimate},
\begin{align*}
\hE_{\eta^0,\eta}\abs{C_{t,0}}\ind_{\eta^1_t\neq \eta^2_t} \leq \hP_{\eta^0,\eta}(\eta^1_t \neq \eta^2_t)^{\frac12}\left(\hE_{\eta^0,\eta}\abs{C_{t,0}}^2\right)^{\frac12}
\leq \norm{\theta_t}_{L^\infty(\mu)}^{\frac12} C_d^{\frac12} (t+1)^{d+1}.
\end{align*}
Therefore
\[ \norm{S_T-\mu(f)}_{\infty} \leq \int_T^{\infty} C_d^{\frac{1}{2}} (t+1)^{d+1} \norm{\theta_t}_{L^\infty(\mu)}^{\frac{1}{2}} \,dt \normb{f}_1.  \qedhere\]
\end{proof}
\begin{proof}[Proof of Proposition \ref{prop:weak-poincare}]
Fix $R>0$. Then, by Lemma \ref{lemma:main-precursor},
\begin{align*}
\Var_\mu(S_t f) &\leq C_d D_1(0,R) \sum_{x\in\bZ^d} \norm{(\nabla_x S_t f)^2}_{L^1(\mu)} + \Var_\mu(S_R S_t f) \\
&\leq C_d \delta^{-1} D_1(0,R) \cE(S_tf,S_tf) + \Var_\mu(S_R f)\\
&\leq C_d \delta^{-1} D_1(0,R) \cE(f, f) + D_p(R) \Phi_R(f).
\end{align*}
The estimate $\Phi_R(S_t f) \leq C_d \sum_{x\in\bZ^d} \norm{(\nabla_x S_t f)^2}_{L^p(\mu)}$ is a direct consequence of Theorem \ref{thm:main-result}. 
\end{proof}
\begin{proof}[Proof of Proposition \ref{prop:uniform-condition}]
Set $d(\eta,\xi):=\sum_{x\in\bZ^d}\ind_{\eta(x)\neq\xi(x)}$ and $h(t):=\sup_{\eta\in\Omega}\hE_{\eta^0,\eta}d(\eta^1_t,\eta^2_t)$.
Then $\norm{\theta_t}_\infty\leq h(t)$. We observe that
\begin{align*}
 h(t+s)&=\sup_{\eta\in\Omega}\hE_{\eta^0,\eta}\hE_{\eta^1_t,\eta^2_t}d(\eta^1_s,\eta^2_s) \\
 &\leq \sup_{\eta\in\Omega}\hE_{\eta^0,\eta}d(\eta^1_t,\eta^2_t)h(s) = h(t)h(s).
\end{align*}
The inequality follows from the fact that $\hE_{\eta,\xi}d(\eta^1_t,\eta^2_t)\leq \hE_{\eta,\zeta}d(\eta^1_t,\eta^2_t) + \hE_{\zeta,\xi}d(\eta^1_t,\eta^2_t)$ applied to an interpolating sequence between $\eta^1_t$ and $\eta^2_t$. A quick calculation shows that
\[ \lim_{t\to0}\frac1t (h(t)-h(0)) = \sup_{\eta\in\Omega}\sum_{\abs{x}=1}\abs{c(x,\eta^0)-c(x,\eta)} -1. \]
Therefore
\[ h(t)\leq \exp\left( \left(\sup_{\eta\in\Omega}\sum_{\abs{x}=1}\abs{c(x,\eta^0)-c(x,\eta)} -1\right)t \right). \qedhere\]
\end{proof}

\begin{proof}[Proof of Proposition \ref{prop:DSM}]
We have 
\[ \theta_t(\eta) = \hP_{\eta^0,\eta}(\eta^1_t\neq \eta^2_t) \leq \sum_{x\in\bZ^d} \hP_{\eta^0,\eta}(\eta^1_t(x) \neq \eta^2_t(x)) \]
By the same comparison argument with first passage percolation as in Lemma \ref{lemma:firstpassagepercolation}, there are constants $k_1,k_2,k_3>0$ so that for $\abs{x}\geq k_1 t $ we have
\begin{align}\label{eq:large-x}
 \hP_{\eta^0,\eta}(\eta^1_t(x) \neq \eta^2_t(x)) \leq k_2 e^{-k_3 \abs{x}}. 
 \end{align}

Now we look at $\abs{x}<k_1 t$.
Write $f_x(\eta) := \eta(x)$. By the assumption of attractiveness,
\[ \hP_{\eta^0,\eta}(\eta^1_t(x) \neq \eta^2_t(x)) = \abs{S_t f_x(\eta^0) - S_t f_x(\eta)}. \]
We denote by $\Lambda_L$ the cube of side length $L$ and $S_t^{\Lambda_L,\xi}$ the finite volume semi-group with boundary condition $\xi$. Since $f_x$ is local, we have 
\[ \lim_{L\to\infty} \norm{S_t^{\Lambda_L, \xi}f_x - S_t f_x}_\infty = 0. \]
Therewith as well as \eqref{eq:DSM-Linfty},
\begin{align}\label{eq:small-x}
\sup_{\eta\in\Omega}\abs{S_t f_x(\eta^0) - S_t f_x(\eta)} &\leq 2 \sup_{L>0,\xi\in\Omega} \norm{S_t^{\Lambda_L, \xi}f_x - \mu^{\Lambda_L, \xi}}_\infty \leq Ce^{-\lambda t}.
\end{align}
Combining \eqref{eq:large-x} and \eqref{eq:small-x} we obtain exponential decay of $\theta_t$ and thereby $D_1$.
\end{proof}

\section*{Acknowledgement}
We thank NWO for financial support, under the project `vrije competitie' number 600.065.100.07N14.

\bibliography{BibCollection}{}
\bibliographystyle{plain}

\end{document}